\newcommand\bx{\boldsymbol{x}}
\newcommand\be{\boldsymbol{e}}
\DeclareMathOperator\tr{tr}
\DeclareMathOperator\re{Re}
\DeclareMathOperator\im{Im}
\DeclareMathOperator\diag{diag}
\DeclareMathOperator*\argmin{argmin}
\DeclareMathOperator\arctanh{arctanh}
\newtheorem{theorem}{Theorem}
\newtheorem{lemma}{Lemma}
\title{How does Gauge Cooling Stabilize Complex Langevin?}
\author{Zhenning Cai}
\address[Zhenning Cai]{Department of Mathematics, National University of Singapore,
  Level 4, Block S17, 10 Lower Kent Ridge Road, Singapore 119076}
\email{matcz@nus.edu.sg}
\date{\today}
\author{Yana Di}
\address[Yana Di]{LSEC, Academy of Mathematics and Systems Science, Chinese Academy of Sciences, Beijing 100190, China, and 
	University of Chinese Academy of Sciences, Beijing 100049, China}
\email{yndi@lsec.cc.ac.cn}
\author{Xiaoyu Dong}
\address[Xiaoyu Dong]{LSEC, Academy of Mathematics and Systems Science, Chinese Academy of Sciences, Beijing 100190, China}
\email{dongxiaoyu@lsec.cc.ac.cn}
\thanks{Zhenning Cai's work was supported by National University of Singapore Startup Fund under grant No. R-146-000-241-133.
Yana Di's work was supported by National Natural Science Foundation of China Nos. 91630208, 91641107, and 11771437. The authors would like to thank Dr. Lei Zhang in Department of Mathematics, National University of Singapore for useful discussions.}
\begin{document}

\begin{abstract}
We study the mechanism of the gauge cooling technique to stabilize the complex Langevin method in the one-dimensional periodic setting. In this case, we find the exact solutions for the gauge transform which minimizes the Frobenius norm of link variables. Thereby, we derive the underlying stochastic differential equations by continuing the numerical method with gauge cooling, and thus provide a number of insights on the effects of gauge cooling. A specific case study is carried out for the Polyakov loop model in $SU(2)$ theory, in which we show that the gauge cooling may help form a localized distribution to guarantee there is no excursion too far away from the real axis.
\end{abstract}

\maketitle

\section{Introduction}
In quantum chromodynamimcs (QCD), the renormalization of the coupling constant depends on the energy scale. As the energy scale increases, the coupling constant decays to zero. Therefore the perturbative theory works well for high-energy scattering. However, when studying QCD at small momenta or energies (less than 1GeV), the coupling constant is comparable to 1 and the perturbative theory is no longer accurate \cite{Greiner2007}. In this case, one of the important methods is the path integral formulation, in which people usually employ the lattice gauge theory to perform calculations. In lattice QCD, the degrees of freedom for both gluons and quarks are discretized on a four-dimensional lattice, whose grid points are
\begin{displaymath}
\bx = (i,j,k,t)a, \qquad i,j,k,t \in \mathbb{Z},
\end{displaymath}
where $a$ is the size of the lattice. Gluons on the lattice are represented by link variables between lattice points, which are matrices $U_{\mu}(\bx) \in SU(3)$, denoting the link between lattice points $\bx$ and $\bx + \be_{\mu}$. Since the degrees of freedom for quarks can usually be integrated out explicitly, the final form of the path integral is given by the following partition function:
\begin{equation} \label{eq:Z}
Z = \int [\mathrm{d}U] \det M(\{U\}) \, \mathrm{e}^{-S(\{U\})},
\end{equation}
where $\int [\mathrm{d}U]$ stands for the integral with respect to all link variables $U_{\mu}(\bx)$ defined on the Haar measure of $SU(3)$, and $\{U\}$ represents the collection of all link variables. In the integrand, the matrix $M(\{U\})$ is the fermion Green's function, and $S(\{U\})$ is the Euclidean action for the gluons. Thus, given an observable $O(\{U\})$, its expected value can be calculated by
\begin{displaymath}
\langle O \rangle = \frac{1}{Z} \int [\mathrm{d}U] O(\{U\}) \det M(\{U\}) \, \mathrm{e}^{-S(\{U\})}.
\end{displaymath}
Monte Carlo methods such as the Metropolis algorithm and the Langevin algorithm can be applied to evaluate this integral.

When we consider the system with quark chemical potential, the term $\det M(\{U\})$ may be non-positive, and thus \eqref{eq:Z} is not a valid partition function \cite{Gattringer2010}. In such a circumstance, the ``reweighting'' technique is required to carry out the Monte Carlo simulation. Such a method introduces another partition function
\begin{displaymath}
Z_0 = \int [\mathrm{d}U] \det M_0(\{U\}) \, \mathrm{e}^{-S(\{U\})}
\end{displaymath}
and rewrite $Z$ as
\begin{displaymath}
Z = Z_0 \left\langle \frac{\det M(\{U\})}{\det M_0(\{U\})} \right\rangle_0,
\end{displaymath}
where $\langle \cdot \rangle_0$ denotes the expectation of $\cdot$ based on the partition function $Z_0$. However, due to the rapid change of sign in $\det M(\{U\}) / \det M_0(\{U\})$, significant numerical sign problem may appear, causing large deviation in the numerical integration \cite{Forcrand2009}.

To relax the sign problem, numerical methods such as Lefschetz thimble method \cite{Cristoforetti2012} and complex Langevin method (CLM) \cite{Parisi1983} have been introduced. This paper focuses on the CLM, which can be considered as a straightforward complexification of the real Langevin method. While the complex Langevin method effectively relaxes the sign problem for some problems, the behavior of this method seems quite unpredictable. Although in a lot of cases, the method produces correct integral values, sometimes it provides incorrect integral values, or even generates diveregent dynamics \cite{Ambjorn1986}. A number of efforts have been made to figure out the reason of failure and find a theory for its correct convergence \cite{Gausterer1994, Gausterer1998, Aarts2010, Nagata2016, Scherzer2019}. Although a complete theory has not been found, the problem has been much better understood.

On the other hand, people are trying to stabilize CLM by various numerical techniques such as dynamical stabilization \cite{Attanasio2018} and gauge cooling \cite{Seiler2013}. The dynamical stabilization adds a regularization term to the dynamics, which avoids divergence by introducing small biases; the gauge cooling technique makes use of the gauge symmetry of the system to stabilize CLM, and therefore does not introduce any biases. The gauge cooling technique has been successfully applied to a number of problems \cite{Sexty2014, Nagata2016G, Aarts2016}, which shows a considerable enhancement of the stability of CLM. Compared with a large number of studies for the original CLM, the analysis for the gauge cooling technique is still rarely seen. The justification of such a technique has been studied in \cite{Nagata2016, Nagata2016J}, while they do not explain how the complex Langevin process is stabilized. In this paper, we would like to carry out some numerical analysis for CLM with gauge cooling based on the one-dimensional $SU(n)$ theory. By such analysis, we would like to make the stabilizing effect more explicit. A specific analysis will be carried out for the Polyakov loop model. This model describes an infinitely heavy quark propagating only in time direction. Thus all the link variables can be labelled by a single subscript: $U_k$, $k = 1,\cdots,N$, and we have periodic boundary conditions. Here we follow the reference \cite{Seiler2013} and choose the partition function to be
\begin{equation} \label{eq:ZPolyakov}
Z = \int [\mathrm{d}U] \exp \left( \tr(\beta_1 U_1 \cdots U_N + \beta_2 U_N^{-1} \cdots U_1^{-1}) \right),
\end{equation}
where $\beta_1$ and $\beta_2$ are constants which are allowed to be complex. We are going to reveal some insights for the gauge cooling technique for this 1D model. Especially, when $n = 2$, we will show that the complex Langevin process is localized when $\beta_1 + \beta_2$ is not too large.

The rest of this paper is organized as follows. In Section \ref{sec:CL}, we briefly review the complex Langevin method and the gauge cooling technique. In Section \ref{sec:GC}, the optimal gauge transform is solved for one-dimensional periodic models. Section \ref{sec:eig} is devoted to the study of gauge cooling after continuation of the numerical method, and some numerical experiments are carried out in Section \ref{sec:examples}. Finally, the paper ends with some concluding remarks in Section \ref{sec:conclusion}.

\section{Complex Langevin method and gauge cooling} \label{sec:CL}
In this section, we provide a brief review of the CLM and the gauge cooling technique. For the sake of simplicity, we only introduce these methods for the one-dimensional lattice gauge theory,

\subsection{Complex Langevin method}
For the Polyakov chain model with partition function \eqref{eq:ZPolyakov}, the expected value of the observable is
\begin{equation} \label{eq:observable}
\langle O \rangle = \frac{1}{Z} \int [\mathrm{d}U] O(\{U\}) \, \mathrm{e}^{-S(\{U\})},
\end{equation}
where $\{U\} = (U_1, U_2, \cdots, U_N) \in [SU(n)]^N$ with $N$ being the number of lattice points. Here the Euclidean action $S(\{U\})$ is defined by
\begin{equation} \label{eq:action}
S(\{U\}) = -\tr(\beta_1 U_1 \cdots U_N + \beta_2 U_N^{-1} \cdots U_1^{-1}).
\end{equation}
Since $SU(n)$ has $n^2-1$ infinitesimal generators, the above integral is an $(n^2-1)N$-dimensional integral, which needs to be done by Monte Carlo method. When $\beta_1$ and $\beta_2$ are real and equal, we have
\begin{displaymath}
S(\{U\}) = -2 \beta_1 \re \tr(U_1 \cdots U_N) \in \mathbb{R},
\end{displaymath}
which means that
\begin{equation} \label{eq:prob}
P(\{U\}) = \frac{1}{Z} \exp(-S(\{U\}))
\end{equation}
is a probability density function. In this case, to evaluate \eqref{eq:observable}, we just need to draw $K$ samples $\{U^{(k)}\}$, $k=1,\cdots,K$ from the above probability density function, and then compute the average of the observable by
\begin{equation} \label{eq:MC}
\langle O \rangle \approx \frac{1}{K} \sum_{k=1}^K O(\{U^{(k)}\}).
\end{equation}
Thus it remains only to provide a practical algorithm to draw the samples $\{U^{(k)}\}$.

A classical method to draw samples from a partition function is the Langevin method (see e.g. \cite[Section 7.1.10]{Greiner2007}), which turns the sampling problem to solving a stochastic ordinary differential system, called Langevin dynamics, whose invariant measure is the desired probability density function. In our case, the Langevin dynamics has been given in \cite{Gausterer1988}, which reads
\begin{equation} \label{eq:SDE}
\mathrm{d} U_k = - \sum_{a=1}^{n^2-1} \mathrm{i} \lambda_a [U_k D_{ak} S(\{U\}) \,\mathrm{d}t + U_k \circ \mathrm{d} w_{ak}], \qquad k=1,\cdots,N.
\end{equation}
Here $\{w_{ak}\}$ denotes the $N(n^2-1)$-dimensional Brownian motion, and the symbol ``${} \!\circ \mathrm{d} w_{ak}$'' stands for the Stratonovich interpretation of the stochastic differential equation (SDE). The matrices $\lambda_a$ denotes the infinitesimal generators of $SU(n)$ normalized by
\begin{equation} \label{eq:orth}
\tr(\lambda_a \lambda_b) = 2\delta_{ab}, \qquad a,b=1,\cdots,n^2-1.
\end{equation}
When $n=2$, they are usually chosen as Pauli matrices; when $n=3$, Gell-Mann matrices are a common choice. In the drift term, $D_{ak}$ is the left Lie derivative operator defined by
\begin{equation} \label{eq:Lie_derivative}
D_{ak} f(\{U\}) = \lim_{\epsilon \rightarrow 0} \frac{f(\{U^{\epsilon}\}) - f(\{U\})} {\epsilon},
\end{equation}
where $\{U^{\epsilon}\} = (U_1^{\epsilon}, \cdots, U_N^{\epsilon})$ and $U_k^{\epsilon} = \exp(\mathrm{i} \epsilon \delta_{kl} \lambda^a U_k)$. Let $p(t,\{U\})$ be the probability density function at time $t$. Then $p(t,\{U\})$ satisfies the following Fokker-Planck equation:
\begin{displaymath}
\frac{\partial p}{\partial t} - D_{ak} [(D_{ak} S) p] = D_{ak}^2 p,
\end{displaymath}
from which one can see that $P(\{U\})$ defined in \eqref{eq:prob} is a stationary solution of the above equation. Thus, when the stochastic process \eqref{eq:SDE} is ergodic, \textit{i.e.}, there exists a unique invariant measure, then the sampling of $\{U\}$ can be implemented as follows:
\begin{enumerate}[1.]
\item Choose an arbitrary initial value $\{U\} = (U_1, \cdots, U_N) \in [SU(n)]^N$ and a proper time step $\Delta t$. Let $t = 0$. Choose $T > 0$ which indicates the time at which the probability density function $p(t,\cdot)$ is sufficiently close to the invariant measure $P(\cdot)$.
\item Evolve the solution by one time step:
  \begin{equation} \label{eq:evolve}
  U_k \leftarrow \exp \left(-\sum_{a=1}^{n^2-1} \mathrm{i} \lambda_a \left( D_{ak}S(\{U\}) \Delta t + \eta_{ak} \sqrt{2\Delta t} \right) \right) U_k, \qquad k = 1,\cdots,N,
  \end{equation}
  where $\eta_{ak}$ obeys the standard normal distribution. Set $t \leftarrow t + \Delta t$.
\item If $t < T$, return to step 2; otherwise, choose $\Delta T > 0$ as the time interval between two adjacent samplings, and let $T \leftarrow T + \Delta T$.
\item Evolve the solution by one time step according to \eqref{eq:evolve}.
\item If $t > T$, take the current $\{U\}$ as a new sample and set $T \leftarrow T + \Delta T$. Stop if the number of samples is sufficient.
\item Return to step 4.
\end{enumerate}
Here the equation \eqref{eq:evolve} is the forward Euler discretization of the Langevin equation \eqref{eq:SDE}. Usually, when implementing the above algorithm, $O(\{U\})$ is immediately evaluated and added to the sum in \eqref{eq:MC}, so that it is unnecessary to record all the samples.

In this paper, we are more interested in the case in which $\beta_1$ and $\beta_2$ are not equal, or even not real. In such a circumstance, the above derivation is no longer valid since $P(\{U\})$ is not a probability function. It has been introduced in the previous section that numerical sign problem may appear if the reweighting technique is used as a remedy. Interestingly, when the action goes complex, the algorithm introduced in the previous paragraph can still be applied if both the action $S(\cdot)$ and the observable $O(\cdot)$ can be holomorphically extended to the following domain of definition:
\begin{displaymath}
[SL(n,\mathbb{C})]^N = \left\{ \{U\} \in (\mathbb{C}^{n\times n})^N \,\Bigg\vert\,
  U_k = \exp \left( \sum_{a=1}^{n^2-1} \mathrm{i} U_{ak} \lambda_a \right), ~~ U_{ak} \in \mathbb{C}, ~~ k=1,\cdots,N \right\}.
\end{displaymath}
Note that on the right-hand side of the above equation, if $U_{ak} \in \mathbb{C}$ is replaced by $U_{ak} \in \mathbb{R}$, then the set is exactly $[SU(n)]^N$. Therefore $[SL(n,\mathbb{C})]^N$ can be considered as a complexification of $[SU(n)]^N$. In this case, the algorithm with exactly the same procedure is called ``complex Langevin method''. It can be formally shown that the complex Langevin method also produces an approximation of $\langle O \rangle$ \cite{Aarts2010, Nagata2016, Nagata2016J, Scherzer2019}. However, it has been demonstrated in \cite{Seiler2013, Seiler2018} that the complex Langevin method does not provide correct results when $N$ is large. This is also confirmed in our numerical experiments presented in Section \ref{sec:examples}.

\subsection{Gauge cooling technique}
It is generally accepted that when the stochastic variable $\{U\}$ drifts too far away from the space $[SU(n)]^N$, the computation tends to provide incorrect results or even fails to converge \cite{Nagata2016, Seiler2018}. In \cite{Seiler2013}, a technique called ``gauge cooling'' was proposed to relax such a problem without introducing any biases to the system. The key idea of gauge cooling is to apply a proper gauge transform after every time step to keep $\{U\}$ close to $[SU(n)]^N$. Such a gauge transform is defined in $[SL(n,\mathbb{C})]^N$, which is called the ``complexified gauge transform''. Due to the gauge symmetry of the lattice gauge theory, such a gauge transform changes neither the action nor the observable. Therefore formally, the process still gives correct expected value. This technique is applicable for the full four-dimensional case. In this paper, we only provide the details for the one-dimensional case with periodic boundary condition.

For the Polyakov chain model, for any field $\{U\} \in [SL(n,\mathbb{C})]^N$, the complexified gauge transform reads
\begin{equation} \label{eq:gc}
\widetilde{U}_k := V_k^{-1} U_k V_{k+1}, \qquad k = 1,\cdots,N,
\end{equation}
where $\{\widetilde{U}\} = (\widetilde{U}_1, \cdots, \widetilde{U}_N)$ is the field after transformation, and $\{V\} = (V_1, \cdots, V_N) \in [SL(n, \mathbb{C})]^N$ defines the gauge transform. Due to the periodic boundary condition, in \eqref{eq:gc}, when $k = N$, the matrix $V_{N+1}$ is identical to $V_1$. The gauge symmetry says that for any $\{U\}, \{V\} \in [SL(n,\mathbb{C})]^N$, we always have
\begin{displaymath}
O(\{U\}) = O(\{\widetilde{U}\}), \qquad S(\{U\}) = S(\{\widetilde{U}\}).
\end{displaymath}
For any $\{U\}$ obtained from equation \eqref{eq:evolve}, our aim is to find $\{V\} \in [SL(n,\mathbb{C})]^N$ such that $\{\widetilde{U}\}$ is close to $[SU(n)]^N$. Different measurement of the closeness has been used in the literature \cite{Seiler2013, Nagata2016}, and here we adopt the one proposed in \cite{Seiler2013}, which is based on the following theorem:
\begin{theorem} \label{thm:F_norm}
Define
\begin{displaymath}
\|\{U\}\| = \left( \sum_{k=1}^N \|U_k\|_F^2 \right)^{1/2}, \qquad \forall \{U\} \in [\mathbb{C}^{n\times n}]^N,
\end{displaymath}
where $\|\cdot\|_F$ is the Frobenius norm of the matrices defined by $\|U_k\|_F^2 = \tr(U_k U_k^{\dagger})$. Then we have
\begin{displaymath}
\|\{U\}\| \geqslant \sqrt{N n}, \qquad \forall \{U\} \in [SL(n,\mathbb{C})]^N,
\end{displaymath}
and the equality holds if and only if $\{U\} \in [SU(n)]^N$.
\end{theorem}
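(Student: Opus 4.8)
The plan is to reduce the statement to a one-matrix inequality and then sum over the $N$ lattice sites. First I would record that every factor $U_k$ in the definition of $[SL(n,\mathbb{C})]^N$ has determinant $1$: since each generator $\lambda_a$ is traceless, the exponent $\sum_{a} \mathrm{i} U_{ak} \lambda_a$ is a (complex) traceless matrix, so by Jacobi's formula $\det U_k = \exp\big(\tr(\sum_a \mathrm{i} U_{ak}\lambda_a)\big) = \exp(0) = 1$. Hence it suffices to prove the following: for any $A \in \mathbb{C}^{n\times n}$ with $\det A = 1$ we have $\|A\|_F^2 \geqslant n$, with equality if and only if $A$ is unitary.

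For this single-matrix bound I would invoke the singular value decomposition $A = W \Sigma V^{\dagger}$ with $\Sigma = \diag(\sigma_1,\dots,\sigma_n)$; because $\det A \ne 0$ all $\sigma_i$ are strictly positive. Then $\|A\|_F^2 = \tr(A A^{\dagger}) = \sum_{i=1}^n \sigma_i^2$, while $\prod_{i=1}^n \sigma_i = |\det A| = 1$. Applying the AM--GM inequality to the positive numbers $\sigma_1^2,\dots,\sigma_n^2$ gives $\frac1n \sum_i \sigma_i^2 \geqslant \big(\prod_i \sigma_i^2\big)^{1/n} = 1$, i.e. $\|A\|_F^2 \geqslant n$. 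Equality in AM--GM forces $\sigma_1 = \dots = \sigma_n$, which together with $\prod_i \sigma_i = 1$ yields $\sigma_i = 1$ for all $i$, hence $\Sigma = I$ and $A = W V^{\dagger}$ is unitary; conversely, if $A$ is unitary then all $\sigma_i = 1$ and $\|A\|_F^2 = n$.

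Summing over $k = 1,\dots,N$ then gives $\|\{U\}\|^2 = \sum_{k=1}^N \|U_k\|_F^2 \geqslant Nn$, and equality holds if and only if $\|U_k\|_F^2 = n$ for every $k$, i.e. if and only if every $U_k$ is unitary. Since each $U_k$ already has determinant $1$, this is precisely the statement that every $U_k$ lies in $SU(n)$, that is, $\{U\} \in [SU(n)]^N$.

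I do not anticipate any real difficulty: the heart of the argument is just AM--GM on the singular values. The only point needing a little care is the equality case — verifying that ``$U_k$ unitary with $\det U_k = 1$'' genuinely coincides with membership in $[SU(n)]^N$ as defined in the excerpt, which follows from the surjectivity of the exponential map $\exp\colon \mathfrak{su}(n) \to SU(n)$ for the compact connected group $SU(n)$.
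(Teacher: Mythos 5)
Your proof is correct and follows essentially the same route as the paper's: the squared singular values you use are exactly the eigenvalues of $U_kU_k^{\dagger}$ on which the paper applies AM--GM, and the equality analysis is identical. The only additions are your explicit justification that $\det U_k=1$ via traceless generators and the remark on surjectivity of the exponential map, both of which the paper takes for granted.
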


\begin{proof}
For any $k=1,\cdots,N$, assuming $\nu_k^{(1)},\nu_k^{(2)},\cdots,\nu_k^{(n)}$ are all the eigenvalues of $U_k U_k^\dagger$ where $U_k \in SL(n,\mathbb{C})$, we have $\|U_k\|_F^2 = \sum_{j=1}^n \nu_k^{(j)}$ and $\prod_{j=1}^n \nu_k^{(j)} =\det(U_k U_k^\dagger) = \det(U_k) \det(U_k^\dagger) =1$. By the inequality of arithmetic and geometric means, we know 
\begin{equation*}
\frac{1}{n} \sum_{j=1}^n \nu_k^{(j)} \geqslant \left(\prod_{j=1}^n \nu_k^{(j)}\right)^{1/n},
\end{equation*}
and the equality holds if and only if $\nu_k^{(1)}=\nu_k^{(2)}\cdots=\nu_k^{(n)}$. Thus, $\|U_k\|_F^2 \geqslant n$ and the equality holds if and only if all the eigenvalues of $U_k U_k^\dagger$ are $1$. Since any Hermitian matrix whose eigenvalues are all equal to $1$ must be the identity matrix, we conlude that $\|U_k\|_F^2 = n$ if and only if $U_k \in SU(n)$. The conclusion of the theorem is then naturally obtained by summing up the square Frobenius norms of $U_k$.
\end{proof}

By this theorem, we can use $\sqrt{\|\{U\}\|^2 - N n}$ to characterize the distance between $\{U\}$ and $[SU(n)]^N$. Thus finding $\{V\}$ requires to solve the following optimization problem:
\begin{equation} \label{eq:opt}
\argmin_{\{V\} \in [SL(n,\mathbb{C})]^N} \|\{\widetilde{U}\}\|^2.
\end{equation}
Here $\{\widetilde{U}\}$ is defined by \eqref{eq:gc}. This optimization problem is to be solved right after the evolution of the field. More precisely, in the algorithm described in the previous subsection, the following gauge cooling step is to be inserted after steps 2 and 4:
\begin{enumerate}[(GC)]
\item Solve the optimization problem \eqref{eq:opt} and set $U_k \leftarrow V_k^{-1} U_k V_{k+1}$ for all $k = 1,\cdots,N$.
\end{enumerate}
Such a gauge cooling process has been formally justified in \cite{Nagata2016J}. To solve the \eqref{eq:opt} numerically, the gradient descent method is proposed in \cite{Seiler2013}. By assuming
\begin{displaymath}
V_k = \exp \left(\mathrm{i} \sum_{a=1}^{n^2-1} V_{ak} \lambda_a \right), 
\end{displaymath}
where $V_{ak} = X_{ak} + \mathrm{i} Y_{ak}$ and $X_{ak},Y_{ak} \in \mathbb{R}$ for any $a$ and $k$, straightforward calculation yields 
\begin{equation} \label{eq:gradient}
\left\{
\begin{aligned}
&\frac{\partial}{\partial X_{ak}} \|\{\widetilde{U}\}\|^2 \bigg\vert_{\{\widetilde{U}\} = \{U\}}
= 0,  \\
&\frac{\partial}{\partial Y_{ak}} \|\{\widetilde{U}\}\|^2 \bigg\vert_{\{\widetilde{U}\} = \{U\}}
  = 2 \tr[\lambda_a(U_k U_k^{\dagger} - U_{k-1}^{\dagger} U_{k-1})],
\end{aligned}
\right.   \qquad \forall a=1,\cdots,n^2-1, \quad \forall k=1,\cdots,N,
\end{equation}
using which the gradient descent method can be developed. In our work, we are going to show that in the one-dimensional periodic case, such an optimization problem can be solved exactly.

\section{Gauge cooling in the one-dimensional case} \label{sec:GC}
This section is devoted to the exact solution of the optimization problem \eqref{eq:opt}. We are going to start from the one-link case with $N=1$, and then generalize the result to the general one-dimensional case.

\subsection{Introductory study: one-link case}
If $N=1$, there is only one matrix in $\{U\}$. Hence the index $k$ for the matrix, which is always $1$, will be omitted. In this case, the gauge transform \eqref{eq:gc} turns out to be a similarity transform, and the gradient equation \eqref{eq:gradient} can be simplified as
\begin{equation*}
\left\{
\begin{aligned}
&\frac{\partial}{\partial X_a} \|\widetilde{U}\|^2 \bigg\vert_{\widetilde{U} = U}
= 0,\\
&\frac{\partial}{\partial Y_a} \|\widetilde{U}\|^2 \bigg\vert_{\widetilde{U} = U}
= 2 \tr[\lambda_a(U U^{\dagger} - U^{\dagger} U)],
\end{aligned}
\right.  \qquad \forall a=1,\cdots,n^2-1,
\end{equation*}
When the optimal gauge transform is chosen, the above gradient must equal zero for every $a = 1,\cdots,n^2-1$:
\begin{equation} \label{eq:opt_gauge}
\tr[\lambda_a(U U^\dagger - U^\dagger U)] = 0, \qquad \forall a=1,\cdots,n^2-1.
\end{equation}
Since $U U^\dagger-U^\dagger U$ is a Hermitian matrix, there exist real numbers $\alpha_1, \cdots, \alpha_{n^2-1}, \beta$ such that
\begin{equation*}
U U^\dagger-U^\dagger U = \sum_{a=1}^{n^2-1} \alpha_a \lambda_a +\beta I.
\end{equation*}
To determine the coefficients $\alpha_a$, we apply the property \eqref{eq:orth} to get
\begin{displaymath}
\tr[\lambda_b(U U^\dagger - U^\dagger U)] = \sum_{a=1}^{n^2-1} \alpha_a \tr(\lambda_b \lambda_a) + \beta \tr(\lambda_b) = \alpha_b, \qquad b = 1,\cdots,n^2-1,
\end{displaymath}
from which one can see that $\alpha_a = 0$ for all $a = 1,\cdots,n^2-1$ when \eqref{eq:opt_gauge} is fulfilled. Meanwhile, we also have $\beta = 0$ since if $\beta$ were positive, we would get
\begin{displaymath}
\det(U U^\dagger) = \det(U^\dagger U + \beta I) \geqslant \det(U^\dagger U) + \beta \det(I) = \det(U U^\dagger) + \beta,
\end{displaymath}
indicating that $\beta \leqslant 0$, which is a contradiction; a similar contradiction can be obtained if we assume $\beta < 0$. Conclusively, after the optimal gauge transform, the link variable $U$ satisfies $U U^\dagger = U^\dagger U$, which means that $U$ is a normal matrix.

A matrix is a normal matrix if and only if it can be diagonalized by a unitary matrix. Therefore, for any $U \in SL(n,\mathbb{C})$, when $V^{-1} U V$ gives the optimal gauge transform, there exists a diagonal matrix $\Lambda$ and a unitary matrix $P$ such that 
\begin{displaymath}
V^{-1} U V = P^{-1} \Lambda P.
\end{displaymath}
Note that the diagonal entries of $\Lambda$ are the eigenvalues of $U$. When $U$ is diagonalizable, i.e., there exists $Q\in SL(n,\mathbb{C})$ such that $U = Q \Lambda Q^{-1}$, one can find that $V = QP$. Conversely, for a diagonalizable $U$, we can choose an arbitrary unitary matrix $S$, and the gauge transform given by $V = QP$ always transforms $U$ to a normal matrix. A simple choice is $P = I$, for which $\widetilde{U} = V^{-1} U V = \Lambda$.

The above analysis shows that in the one-link case, for every time step, we can choose the gauge transform such that the link $U$ is always a diagonal matrix. Thereby, it is possible to write down a new stochastic process for this diagonal link, and get a clearer picture how gauge cooling helps pull back the complexified links. Before that, we are going to generalize the above results to the 1D case with periodic boundary conditions, and make the analysis more rigorous. Interestingly, in the general 1D case, the link variables $U_k$ can still be maintained as diagonal by choosing optimal gauge transforms. This will be detailed in the following subsection.

\subsection{General 1D case} \label{sec:gc_1D}
Now we are going to restore the index $k$ and study the optimization problem \eqref{eq:opt} for any $N$. To begin with, we show that equating \eqref{eq:gradient} to zero is equivalent to solving the optimization problem:

\begin{lemma}
Suppose $\{U\}$ satisfies
\begin{equation} \label{eq:zero_gradient}
\tr[\lambda_a(U_k U_k^{\dagger} - U_{k-1}^{\dagger} U_{k-1})] = 0,
  \qquad \forall a=1,\cdots,n^2-1, \quad \forall k=1,\cdots,N.
\end{equation}
Then for any gauge transform defined by \eqref{eq:gc}, we have $\|\{\widetilde{U}\}\| \geqslant \|\{U\}\|$.
\end{lemma}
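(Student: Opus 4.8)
The plan is to show that the first-order condition \eqref{eq:zero_gradient} is not merely necessary but actually characterizes a global minimizer, which amounts to exploiting convexity of the objective along the appropriate parametrization of the gauge group. First I would reduce to a one-parameter family: fix an arbitrary gauge transform $\{V\}$ and write $V_k = \exp(i t H_k)$ where $H_k$ ranges over Hermitian matrices (the non-Hermitian ``real'' directions $X_{ak}$ can be discarded at once, since \eqref{eq:gradient} already tells us the objective is stationary and in fact — as a quick computation shows — independent of those directions, because $V_k^{-1} U_k V_{k+1}$ with unitary $V_k$ leaves each $\|U_k\|_F^2$ unchanged). So the real content is the behavior of $g(t) := \|\{\widetilde{U}(t)\}\|^2$ where $\widetilde{U}_k(t) = e^{-itH_k} U_k e^{itH_{k+1}}$, and the goal is $g(1) \geqslant g(0)$ whenever $g'(0) = 0$.

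The key step is to prove that $g(t)$ is convex in $t \in \mathbb{R}$ for every choice of Hermitian $H_1,\dots,H_N$. Expanding, $g(t) = \sum_k \tr\bigl(e^{-itH_k} U_k e^{itH_{k+1}} e^{-itH_{k+1}} U_k^\dagger e^{itH_k}\bigr) = \sum_k \tr\bigl(e^{-itH_k} U_k e^{2itH_{k+1}} U_k^\dagger e^{itH_k}\bigr)$ — wait, one must be careful with the adjoint: $\widetilde{U}_k^\dagger = e^{-itH_{k+1}} U_k^\dagger e^{itH_k}$, so $\widetilde{U}_k \widetilde{U}_k^\dagger = e^{-itH_k} U_k e^{itH_{k+1}} e^{-itH_{k+1}} U_k^\dagger e^{itH_k}$, and since $e^{itH_{k+1}} e^{-itH_{k+1}} = I$ this collapses; the nontrivial $t$-dependence is genuinely through the outer $e^{\pm itH_k}$ only on each term but with the index shift coupling neighbors across the sum. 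The cleanest route is to diagonalize each $H_k = W_k \Delta_k W_k^\dagger$ with $\Delta_k = \diag(\delta_k^{(1)},\dots,\delta_k^{(n)})$ real, absorb $W_k$ into $U_k$, and then each entry of the relevant product becomes a pure exponential $e^{i t (\delta_{k+1}^{(q)} - \delta_k^{(p)})}$ times $|(U_k)_{pq}|^2$; summing gives $g(t) = \sum_k \sum_{p,q} |(U_k)_{pq}|^2 e^{2t(\text{something real})}$ — here I would track signs carefully, but the upshot is that $g(t)$ is a nonnegative combination of real exponentials $e^{c t}$, each of which is convex, hence $g$ is convex. A convex function with a stationary point at $t=0$ attains its minimum there, so $g(1) \geqslant g(0)$, which is exactly $\|\{\widetilde{U}\}\| \geqslant \|\{U\}\|$.

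Then I would tie off the logic: given an arbitrary gauge transform $\{V\}$, set $H_k$ so that $e^{iH_k}$ equals the unitary part... actually the slicker statement avoids polar decomposition — simply note any $\{V\}$ can be reached by some path, but since we only need the single endpoint comparison, it suffices to write $V_k = e^{iH_k}$ directly for an arbitrary complex matrix $H_k$ (not necessarily Hermitian) and split $H_k = A_k + i B_k$ with $A_k, B_k$ Hermitian; the $A_k$ part is a unitary similarity-type transform contributing nothing, and the computation above handles the $B_k$ part. One should double-check that the hypothesis \eqref{eq:zero_gradient} indeed forces $g'(0) = 0$ for \emph{every} direction $\{H_k\}$ and not just the coordinate directions $\lambda_a$ — but this is immediate since $\{\lambda_a\}$ together with $I$ span all Hermitian matrices, the $\lambda_a$-components of $g'(0)$ vanish by \eqref{eq:zero_gradient}, and the $I$-component (the $\beta$-direction) corresponds to scaling $V_k$ by a phase, which leaves $\widetilde{U}_k$ unchanged, hence contributes zero automatically.

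The main obstacle I anticipate is purely bookkeeping rather than conceptual: getting the exponents exactly right in the diagonalized expansion — in particular confirming that after the index shift $k \mapsto k+1$ in one factor versus $k$ in the other, the surviving $t$-dependence really does organize into single real exponentials per matrix-entry-pair rather than into something like $e^{ct} + e^{-ct}\cos(\cdots)$ that could fail to be convex. If a cross term with oscillatory dependence survived, convexity would break; so the crux is verifying the cancellation $e^{itH_{k+1}}e^{-itH_{k+1}} = I$ that decouples the two factors and leaves a clean sum of $|(U_k)_{pq}|^2 e^{2t(\delta^{(q)} - \delta^{(p)})}$ (with the two Hermitian matrices at positions $k$ and $k+1$ appearing only through this single combination after simultaneous diagonalization is performed consistently along the chain). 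Assuming that goes through as expected, the rest is a one-line convexity argument.
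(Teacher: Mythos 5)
Your overall strategy --- reduce to one-parameter subgroups, prove convexity of $t\mapsto\|\{\widetilde U(t)\}\|^2$, and combine with the first-order condition --- is sound and is essentially a sharper form of the paper's own argument (the paper computes the Hessian in the $Y_{ak}$ coordinates and shows the quadratic form equals $4\sum_k\|U_kM_{k+1}-M_kU_k\|_F^2\geqslant 0$). But the step you yourself flag as the crux fails as written. With $H_k$ Hermitian and $t$ real, $V_k=e^{\mathrm{i}tH_k}$ is \emph{unitary}, so
\begin{displaymath}
\widetilde U_k(t)\widetilde U_k(t)^\dagger=e^{-\mathrm{i}tH_k}U_k\,e^{\mathrm{i}tH_{k+1}}e^{-\mathrm{i}tH_{k+1}}\,U_k^\dagger e^{\mathrm{i}tH_k}=e^{-\mathrm{i}tH_k}\,U_kU_k^\dagger\, e^{\mathrm{i}tH_k},
\end{displaymath}
whose trace is $\tr(U_kU_k^\dagger)$: your $g(t)$ is identically constant. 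You have parametrized only the trivial (unitary, $X_{ak}$) directions, which is why the ``cancellation'' leaves nothing behind; the real exponentials $e^{2t(\delta^{(q)}-\delta^{(p)})}$ you announce at the end cannot arise from this family. The directions that matter are $V_k=e^{tB_k}$ with $B_k$ Hermitian traceless (positive-definite $V_k$), for which the middle exponentials \emph{combine} rather than cancel: $\widetilde U_k\widetilde U_k^\dagger=e^{-tB_k}U_k\,e^{2tB_{k+1}}U_k^\dagger e^{-tB_k}$, and after diagonalizing $B_k=W_k\Delta_kW_k^\dagger$ and setting $\hat U_k=W_k^\dagger U_kW_{k+1}$ one gets $g(t)=\sum_k\sum_{p,q}|(\hat U_k)_{pq}|^2\,e^{2t(\delta_{k+1}^{(q)}-\delta_k^{(p)})}$, which is indeed convex. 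Since Hermitian traceless matrices are real spans of the $\lambda_a$, hypothesis \eqref{eq:zero_gradient} then gives $g'(0)=0$ in every such direction. (Your worry about an $I$-component is moot because the Lie algebra of $SL(n,\mathbb{C})$ is traceless; note also that a nonzero $I$-component would \emph{not} leave $\widetilde U_k$ unchanged --- it is a positive rescaling $e^{c_{k+1}-c_k}U_k$, not a phase.)

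Two smaller repairs are also needed. First, a general $V_k$ is not $e^{\mathrm{i}A_k}e^{-B_k}$ when you split the exponent additively, since $A_k$ and $B_k$ need not commute; use the polar decomposition $V_k=W_ke^{B_k}$ instead, which gives $\widetilde U_k=e^{-B_k}(W_k^\dagger U_kW_{k+1})e^{B_{k+1}}$. Second, you must check that the intermediate field $U_k'=W_k^\dagger U_kW_{k+1}$ still satisfies the hypothesis and has the same norm as $\{U\}$. This holds because \eqref{eq:zero_gradient} is equivalent to the matrix identity $U_kU_k^\dagger=U_{k-1}^\dagger U_{k-1}$ (expand the traceless Hermitian matrix $U_kU_k^\dagger-U_{k-1}^\dagger U_{k-1}$ in the basis $\{\lambda_a\}$, as the paper does in the one-link discussion), and that identity transforms covariantly under conjugation by $W_k$. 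With these fixes your proof goes through and, if anything, yields more than the paper's: explicit global convexity along each one-parameter subgroup as a nonnegative sum of exponentials, rather than pointwise positive semidefiniteness of a Hessian.
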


\begin{proof}
It suffices to show that the Hessian matrix is positive semidefinite everywhere, so that the first-order optimality condition \eqref{eq:zero_gradient} gives the global minimum. Again we let $V_{ak} = X_{ak} + \mathrm{i} Y_{ak}$. Because $\partial_{X_{ak}} \|\widetilde{U}\|^2$ equal zero at $\widetilde{U} = U$ for any $U$, we only need to calculate the second-order deriatives of $\|\{\widetilde{U}\}\|^2$ with respect to the imaginary parts $Y_{ak}$. For any $a,b=1,\cdots,n^2-1$ and $ k,l=1,\cdots,N$, by straightforward calcuation, we obtain
\begin{equation*}
H_{akbl}(\{U\}) = \frac{\partial^2}{\partial Y_{ak} \partial Y_{bl}} \|\{\widetilde{U}\}\|^2 \bigg\vert_{\{\widetilde{U}\} = \{U\}}
  = \left\{
\begin{aligned}
  & -4 \tr(\lambda_a U_k \lambda_b U_k^\dagger), \quad &&k=l-1;\\
  & 2 \tr[(\lambda_a \lambda_b + \lambda_b \lambda_a) (U_k U_k^{\dagger} + U_{k-1}^{\dagger} U_{k-1})],
  \quad &&k=l;\\
  & -4 \tr(\lambda_a U_{k-1}^\dagger \lambda_b U_{k-1}), \quad &&k=l+1;\\
  & 0, \quad &&\mathrm{otherwise}.
\end{aligned}
\right.
\end{equation*}
To show the convexity of $\|\{\widetilde{U}\}\|^2$ as a function of $Y_{ak}$, we just need to show that
\begin{displaymath}
R := \sum_{a,b=1}^{n^2-1} \sum_{k,l=1}^N v_{ak} H_{akbl}(\{U\}) v_{bl} \geqslant 0
\end{displaymath}
for any $v_{ak} \in \mathbb{R}$ (note that $H_{akbl}$ is real). To prove this, we calculate $R$ as follows:
\begin{equation*}
\begin{aligned}
R & = \sum_{a,b=1}^{n^2-1} \sum_{k=1}^N v_{ak} H_{akb,k-1}(\{U\}) v_{b,k-1}
  + \sum_{a,b=1}^{n^2-1} \sum_{k=1}^N v_{ak} H_{akb,k+1}(\{U\}) v_{b,k+1}
  + \sum_{a,b=1}^{n^2-1} \sum_{k=1}^N v_{ak} H_{akbk}(\{U\}) v_{bk} \\
& = \sum_{a,b=1}^{n^2-1} \sum_{k,l=1}^N v_{ak} [-4 \tr(\lambda_a U_{k-1}^\dagger \lambda_b U_{k-1})] v_{b,k-1} 
  + \sum_{a,b=1}^{n^2-1} \sum_{k=1}^N v_{ak} [-4 \tr(\lambda_a U_k \lambda_b U_k^\dagger)] v_{b,k+1} \\
& \quad + \sum_{a,b=1}^{n^2-1} \sum_{k=1}^N v_{ak} [2 \tr((\lambda_a \lambda_b + \lambda_b \lambda_a) (U_k U_k^{\dagger} + U_{k-1}^{\dagger} U_{k-1}))] v_{bk} \\
& =-4 \sum_{k=1}^N \tr \left[\big{(}\sum_{a=1}^{n^2-1} \lambda_a v_{ak} \big{)}  U_{k-1}^\dagger \big{(} \sum_{b=1}^{n^2-1} \lambda_b v_{b,k-1} \big{)} U_{k-1} \right] -4 \sum_{k=1}^N \tr \left[\big{(}\sum_{a=1}^{n^2-1} \lambda_a v_{ak} \big{)}  U_k \big{(}\sum_{b=1}^{n^2-1} \lambda_b v_{b,k+1} \big{)}  U_k^\dagger \right] \\
& \quad +2 \sum_{k=1}^N  \tr\left[\left(\big{(}\sum_{a=1}^{n^2-1} \lambda_a v_{ak} \big{)}  \big{(}\sum_{b=1}^{n^2-1} \lambda_b v_{bk} \big{)}  + \big{(}\sum_{b=1}^{n^2-1} \lambda_b v_{bk} \big{)}  \big{(}\sum_{a=1}^{n^2-1} \lambda_a v_{ak} \big{)} \right) (U_k U_k^{\dagger} + U_{k-1}^{\dagger} U_{k-1})\right] .
\end{aligned}
\end{equation*}
For simplicity, we define $M_k = \sum_{a=1}^{n^2-1} v_{ak}\lambda_a$, which is Hermitian, and thus the above equation becomes
\begin{equation*}
\begin{aligned}
R & =-4 \sum_{k=1}^N \tr (M_k  U_{k-1}^\dagger  M_{k-1} U_{k-1})
  -4 \sum_{k=1}^N \tr (M_k U_k M_{k+1}  U_k^\dagger)
  +2 \sum_{k=1}^N  \tr[(M_k  M_k + M_k M_k) (U_k U_k^{\dagger} + U_{k-1}^{\dagger} U_{k-1})] \\
& = 4 \left( \sum_{k=1}^N \tr(M_k  M_k U_{k-1}^{\dagger} U_{k-1})
  -  \sum_{k=1}^N \tr (M_k^\dagger  U_{k-1}^\dagger  M_{k-1} U_{k-1})
  +  \sum_{k=1}^N  \tr (M_k M_k U_k U_k^{\dagger})
  -\sum_{k=1}^N \tr (M_k U_k M_{k+1}  U_k^\dagger) \right).
\end{aligned}
\end{equation*}
Because of the periodic boundary conditions for $\{U\}$, it is obvious that
\begin{equation*}
\begin{aligned}
&\sum_{k=1}^N \tr(M_k  M_k U_{k-1}^{\dagger} U_{k-1})
 = \sum_{k=1}^N \tr(M_{k+1}  M_{k+1} U_k^{\dagger} U_k), \\
& \sum_{k=1}^N \tr (M_k U_{k-1}^\dagger  M_{k-1} U_{k-1})
= \sum_{k=1}^N \tr (M_{k+1}  U_k^\dagger  M_k U_k). 
\end{aligned}
\end{equation*}
Therefore, we have
\begin{equation*}
\begin{aligned}
R & = 4 \left( \sum_{k=1}^N \tr(M_{k+1}  M_{k+1} U_k^{\dagger} U_k)
   - \sum_{k=1}^N \tr (M_{k+1}^\dagger  U_k^\dagger  M_k U_k) 
+  \sum_{k=1}^N  \tr (M_k M_k U_k U_k^{\dagger})
   - \sum_{k=1}^N \tr (M_k^\dagger U_k M_{k+1}  U_k^\dagger] \right) \\
& = 4 \sum_{k=1}^N \tr[(U_k M_{k+1} - M_k U_k)(M_{k+1} U_k^\dagger - U_k^\dagger M_k)]
= 4 \sum_{k=1}^N \tr[(U_k M_{k+1} - M_k U_k)(U_k M_{k+1} - M_k U_k)^\dagger]\\
 & \geqslant 0,
\end{aligned}
\end{equation*}
which indicates that $H_{akbl}(\{U\})$ is positive semidefinite.
\end{proof}
 
By the above lemma, we only need to focus on the algebraic equations \eqref{eq:zero_gradient} with $\{U\}$ replaced by $\{\widetilde{U}\}$. In order to apply the idea used in the one-link case, we also write the gauge transform as a simiarity transform by introducing the block matrix $\mathcal{U}$, defined by
\begin{equation} \label{matrix:U}
   \mathcal{U} =
\begin{pmatrix}
   &U_1 \\
   &&U_2 \\
   &&&\ddots \\
   &&&&U_{N-1} \\
   U_N
\end{pmatrix},
\end{equation}
which is an $nN \times nN$ matrix with $N\times N$ blocks. Thereby, the gauge transform
\begin{equation*}
\widetilde{U}_k = V_k^{-1} U_k V_{k+1}, \quad \forall k=1,\cdots,N
\end{equation*}
can be written as
\begin{equation*}
\widetilde{\mathcal{U}} = \mathcal{V}^{-1} \mathcal{U} \mathcal{V},
\end{equation*}
where $\mathcal{V} = \diag(V_1, V_2, \cdots, V_N)$. Below we claim that similar to the one-link case, the optimal gauge is achieved when $\mathcal{U}$ is a normal matrix.

\begin{theorem} \label{thm:optimality}
The block matrix $\mathcal{U}$ defined by \eqref{matrix:U} is normal if and only if $\{U\}$ satisfies \eqref{eq:zero_gradient}.
\end{theorem}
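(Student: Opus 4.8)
The plan is to reduce the normality of $\mathcal{U}$ to a blockwise condition that is then dispatched exactly as in the one-link case. First I would compute the two products $\mathcal{U}\mathcal{U}^\dagger$ and $\mathcal{U}^\dagger\mathcal{U}$ directly from the block structure \eqref{matrix:U}. Since $\mathcal{U}$ carries the single block $U_k$ in the $(k,k+1)$ position (indices modulo $N$, so that $U_N$ sits in the $(N,1)$ position), its conjugate transpose $\mathcal{U}^\dagger$ carries $U_k^\dagger$ in the $(k+1,k)$ position. A short calculation then shows that both products are block diagonal:
\[
\mathcal{U}\mathcal{U}^\dagger = \diag(U_1 U_1^\dagger, U_2 U_2^\dagger, \dots, U_N U_N^\dagger), \qquad
\mathcal{U}^\dagger\mathcal{U} = \diag(U_N^\dagger U_N, U_1^\dagger U_1, \dots, U_{N-1}^\dagger U_{N-1}),
\]
where the periodic convention $U_0 = U_N$ is used in the second identity. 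Consequently $\mathcal{U}\mathcal{U}^\dagger - \mathcal{U}^\dagger\mathcal{U}$ is block diagonal with its $k$-th diagonal block equal to $U_k U_k^\dagger - U_{k-1}^\dagger U_{k-1}$, so that $\mathcal{U}$ is normal if and only if $U_k U_k^\dagger = U_{k-1}^\dagger U_{k-1}$ for every $k = 1,\dots,N$. This is precisely the statement that all blocks appearing in \eqref{eq:zero_gradient} vanish.

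Given this reformulation, the direction ``$\mathcal{U}$ normal $\Rightarrow$ \eqref{eq:zero_gradient}'' is immediate, since each block of the commutator vanishes and therefore so does every trace of it against $\lambda_a$. For the converse I would fix $k$ and run the one-link argument on the Hermitian matrix $H_k := U_k U_k^\dagger - U_{k-1}^\dagger U_{k-1}$: expand $H_k = \sum_{a=1}^{n^2-1} \alpha_a^{(k)} \lambda_a + \beta^{(k)} I$ with real coefficients, then use the normalization \eqref{eq:orth} together with $\tr\lambda_a = 0$ to read off from \eqref{eq:zero_gradient} that $\alpha_a^{(k)} = 0$ for all $a$, hence $H_k = \beta^{(k)} I$. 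To eliminate $\beta^{(k)}$, note that $U_k U_k^\dagger$ and $U_{k-1}^\dagger U_{k-1}$ are positive definite with determinant $|\det U_k|^2 = |\det U_{k-1}|^2 = 1$; substituting $U_k U_k^\dagger = U_{k-1}^\dagger U_{k-1} + \beta^{(k)} I$ into the determinant estimate used for $N=1$ forces $\beta^{(k)} \le 0$, while the symmetric rearrangement $U_{k-1}^\dagger U_{k-1} = U_k U_k^\dagger - \beta^{(k)} I$ forces $\beta^{(k)} \ge 0$. Hence $\beta^{(k)} = 0$, so $H_k = 0$ for every $k$, and $\mathcal{U}$ is normal by the first paragraph.

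I do not expect a genuine obstacle here: once the identities for $\mathcal{U}\mathcal{U}^\dagger$ and $\mathcal{U}^\dagger\mathcal{U}$ are in hand, the theorem is just the one-link result applied to each diagonal block. The only place demanding care is the bookkeeping of the cyclic block indices in that first computation — in particular applying the periodic identification $U_0 = U_N$ consistently so that the $k$-th block of the commutator lines up with the $k$-th equation of \eqref{eq:zero_gradient} — and, as a minor point, making sure the determinant inequality from the one-link analysis is invoked with the correct orientation in both rearrangements so that the two sign contradictions in the last step are airtight.
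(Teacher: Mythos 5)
Your proof is correct and takes essentially the same approach as the paper: reduce normality of $\mathcal{U}$ to the blockwise identity $U_k U_k^\dagger = U_{k-1}^\dagger U_{k-1}$ (with the periodic convention $U_0 = U_N$) via the block-diagonal commutator, and then settle the equivalence with \eqref{eq:zero_gradient} by the one-link expansion in the $\lambda_a$ basis together with the determinant argument, which works here because $\det(U_k U_k^\dagger) = \det(U_{k-1}^\dagger U_{k-1}) = 1$. The paper merely compresses that last step into the phrase ``by the same argument as in the one-link case,'' which you have spelled out.
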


\begin{proof}
By the same argument as in the one-link case, we know that \eqref{eq:zero_gradient} is equivalent to
\begin{displaymath}
U_k U_k^{\dagger} - U_{k-1}^{\dagger} U_{k-1} = 0, \qquad \forall k = 1,\cdots,N.
\end{displaymath}
Since
\begin{equation*}
\mathcal{U} \mathcal{U}^\dagger - \mathcal{U}^\dagger \mathcal{U} =
\diag (U_1 U_1^\dagger-U_N^\dagger U_N, U_2 U_2^\dagger-U_1^\dagger U_1,\cdots,U_N U_N^\dagger-U_{N-1}^\dagger U_{N-1}^\dagger),
\end{equation*}
the equivalence stated in the theorem is manifest.
\end{proof}

To proceed, we again follow the one-link case and try to find the optimal gauge transform by diagonalizing the matrix $\mathcal{U}$. The result is given in the following theorem:
\begin{theorem}
Suppose $U_N U_1 \cdots U_{N-1}$ can be diagonalized by
\begin{equation} \label{eq:eigUprod}
U_N U_1 \cdots U_{N-1} = Q \Lambda Q^{-1},
\end{equation}
where $\Lambda = \diag(\mu_1, \cdots, \mu_n)$ and $\det Q = 1$. Then $\mathcal{U}$ can be diagonalized by
\begin{displaymath}
\mathcal{U} = \mathcal{T} \mathcal{S} \diag(\Lambda_1, \cdots, \Lambda_N) (\mathcal{T} \mathcal{S})^{-1},
\end{displaymath}
where
\begin{align} \label{eq:T}
\mathcal{T} &= \diag(U_1 U_2 \cdots U_{N-1} Q, U_2 \cdots U_{N-1} Q, \cdots, U_{N-1} Q, Q), \\
\label{eq:Lambda_k}
\Lambda_k &= \diag(\mu_1^{(k)}, \cdots, \mu_n^{(k)}), \qquad k = 1,\cdots,N, \\
\label{eq:S}
\mathcal{S} &= \frac{1}{\sqrt{N}}
\begin{pmatrix}
   \Lambda_1^{-(N-1)} & \Lambda_2^{-(N-1)} & \cdots & \Lambda_N^{-(N-1)} \\
   \Lambda_1^{-(N-2)} & \Lambda_2^{-(N-2)} & \cdots & \Lambda_N^{-(N-2)} \\
   \vdots & \vdots & \ddots & \vdots \\
   \Lambda_1^{-1} & \Lambda_2^{-1} & \cdots & \Lambda_N^{-1} \\
   I & I & \cdots & I
\end{pmatrix},
\end{align}
and $\mu_j^{(1)}, \cdots, \mu_j^{(N)}$ are all the distinct $N$th roots of $\mu_j$.
\end{theorem}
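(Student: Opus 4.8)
The plan is to realize the claimed factorization as a composition of two elementary similarities: one that straightens $\mathcal{U}$ into a \emph{block companion matrix}, and one that diagonalizes the latter by a block Vandermonde matrix. Write $\mathcal{T} = \diag(T_1,\dots,T_N)$ with $T_m = U_m U_{m+1}\cdots U_{N-1}Q$ for $m<N$ and $T_N = Q$ (this is $\mathcal{T}$ of \eqref{eq:T}), and let $\mathcal{C}$ be the $nN\times nN$ block matrix carrying the $n\times n$ identity $I$ in each block position $(k,k+1)$ for $k=1,\dots,N-1$, the matrix $\Lambda$ in position $(N,1)$, and zero elsewhere. First I would check, by a short block computation using the telescoping identity $U_m T_{m+1} = T_m$ together with $U_N T_1 = U_N U_1\cdots U_{N-1}Q = Q\Lambda$ (the last step being \eqref{eq:eigUprod}), that $\mathcal{U}\mathcal{T} = \mathcal{T}\mathcal{C}$; since $\det Q = 1$ and each $U_k$ is invertible, $\mathcal{T}$ is invertible and hence $\mathcal{U}$ is similar to $\mathcal{C}$.

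Next I would diagonalize $\mathcal{C}$ explicitly. With $\mathcal{S}$ as in \eqref{eq:S}, one computes block-row by block-row that $\mathcal{C}\mathcal{S} = \mathcal{S}\diag(\Lambda_1,\dots,\Lambda_N)$: for block rows $m<N$ this reduces to $\Lambda_k^{-(N-m-1)} = \Lambda_k\,\Lambda_k^{-(N-m)}$, and for block row $N$ it reduces to $\Lambda\,\Lambda_k^{-(N-1)} = \Lambda_k$, i.e.\ $\Lambda_k^N = \Lambda$, which holds because each diagonal entry $\mu_j^{(k)}$ of $\Lambda_k$ is an $N$th root of the entry $\mu_j$ of $\Lambda$. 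Combining the two identities yields $\mathcal{U}(\mathcal{T}\mathcal{S}) = \mathcal{T}\mathcal{C}\mathcal{S} = (\mathcal{T}\mathcal{S})\diag(\Lambda_1,\dots,\Lambda_N)$, which is the asserted relation provided $\mathcal{T}\mathcal{S}$ is invertible.

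The remaining, and in my view only genuinely delicate, point is the invertibility of $\mathcal{S}$ (that of $\mathcal{T}$ being immediate). Because every $\Lambda_k$ is diagonal, a single permutation of the $nN$ rows and columns that groups coordinates according to their position $j\in\{1,\dots,n\}$ within each $n\times n$ block turns $\mathcal{S}$ into a block-diagonal matrix whose $j$th diagonal block is $\frac{1}{\sqrt N}$ times the $N\times N$ Vandermonde matrix built from the nodes $(\mu_j^{(1)})^{-1},\dots,(\mu_j^{(N)})^{-1}$. Here one must be careful that even when $U_N U_1\cdots U_{N-1}$ has repeated eigenvalues --- so that the full multiset $\{\mu_j^{(k)}\}_{j,k}$ has repetitions --- each such block still carries the \emph{distinct} node set $\{\mu_j^{(k)}\}_{k=1}^N$ associated with a single $\mu_j$. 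These $N$ nodes are distinct precisely because $\det(U_N U_1\cdots U_{N-1}) = \prod_k \det U_k = 1$ forces $\mu_j \neq 0$, and a nonzero number has $N$ distinct $N$th roots; hence every Vandermonde block, and therefore $\mathcal{S}$ and $\mathcal{T}\mathcal{S}$, is nonsingular, which completes the argument. Everything else is routine block-matrix bookkeeping.
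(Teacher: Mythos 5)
Your proof is correct, and it differs from the paper's in two substantive ways. The paper proceeds by (i) characterizing the spectrum of $\mathcal{U}$ directly --- concatenating the block eigenvector equations $\nu^{-1}U_k x_{k+1}=x_k$ to show that $\nu$ is an eigenvalue of $\mathcal{U}$ if and only if $\nu^N$ is an eigenvalue of $U_N U_1\cdots U_{N-1}$ --- and then (ii) verifying column-block by column-block that $\mathcal{U}\,\mathcal{T}\mathcal{S}=\mathcal{T}\mathcal{S}\,\diag(\Lambda_1,\dots,\Lambda_N)$, using $U_NU_1\cdots U_{N-1}Q\Lambda_k^{-(N-1)}=Q\Lambda_k$. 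You instead factor the similarity through the block companion matrix $\mathcal{C}$, establishing $\mathcal{U}\mathcal{T}=\mathcal{T}\mathcal{C}$ and $\mathcal{C}\mathcal{S}=\mathcal{S}\,\diag(\Lambda_1,\dots,\Lambda_N)$ separately; this is an equivalent computation but makes the structure (reduction to a companion form, then a block Vandermonde diagonalization) more transparent, and it renders the paper's step (i) unnecessary, since an explicit similarity to a diagonal matrix already exhibits all eigenvalues. More importantly, you supply the one ingredient the paper's proof leaves implicit: the invertibility of $\mathcal{T}\mathcal{S}$. The paper never checks this inside the theorem's proof (it only follows a posteriori from Lemma~\ref{lem:S}, where $\mathcal{S}\mathcal{S}^\dagger$ is computed to be an invertible diagonal matrix), whereas your permutation of $\mathcal{S}$ into $n$ disjoint $N\times N$ Vandermonde blocks on the distinct nodes $(\mu_j^{(k)})^{-1}$, together with $\mu_j\neq 0$ from $\prod_k\det U_k=1$, settles it directly. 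Your version is therefore the more complete argument; the paper's step (i) does yield as a by-product the explicit eigenvectors used later in Section~\ref{sec:eig}, which your route does not produce on its own.
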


\begin{proof}
We first prove that the diagonal entries of $\Lambda_k$, $k=1,\cdots,N$ provide all the eigenvalues of $\mathcal{U}$. In other words, we are going to show that $\nu$ is an eigenvalue of $\mathcal{U}$ if and only if $\nu^N = \mu_j$ for some $j \in \{1,\cdots,n\}$. If $\nu$ is an eigenvalue of $\mathcal{U}$, since $\det \mathcal{U} = (-1)^{N+1}\det(U_1 U_2 \cdots U_N) = (-1)^{N+1}$, we have $\nu \neq 0$. Let $x = (x_1^T,x_2^T,\cdots,x_N^T)^T$, where $x_k \in \mathbb{C}^n$ for all $k=1,\cdots,N$, be the associated eigenvector. By $\mathcal{U} x = \nu x$, we get
\begin{equation} \label{eq:eigen}
\frac{1}{\nu} U_1 x_2 = x_1,\quad \cdots, \quad \frac{1}{\nu} U_{N-1} x_N = x_{N-1}, \quad \frac{1}{\nu} U_N x_1 = x_N. 
\end{equation}
Concatenating these equations yields
\begin{equation*}
\frac{1}{\nu^N} U_N U_1 \cdots U_{N-1} x_N = x_N
\end{equation*}
which shows that $\nu^N$ is the eigenvalue of $U_N U_1 \cdots U_{N-1}$. On the contrary, for any $j = 1,\cdots,n$, suppose $x_N \in \mathbb{C}^{n} \setminus \{0\}$ is the eigenvector of $U_N U_1 \cdots U_{N-1}$ associated with the eigenvalue $\mu_j$. When $\nu^N = \mu_j$, we can define $x_1, \cdots, x_{N-1}$ by equations \eqref{eq:eigen}. Let $x = (x_1^T,x_2^T,\cdots,x_N^T)^T$, we get $\mathcal{U} x = \nu x$. Therefore, $\nu$ is an eigenvalue of $\mathcal{U}$.

Because $\Lambda_k^N = \Lambda$ for all $k=1,\cdots,N$ based on the equation \eqref{eq:Lambda_k}, it follows from \eqref{eq:eigUprod} that $U_N U_1 \cdots U_{N-1} Q \Lambda_k^{-(N-1)} = Q \Lambda_k$. Using these equality, one obtains by straightforward calculation that
\begin{equation*}
\mathcal{U} \mathcal{T}
\begin{pmatrix}
   \Lambda_k^{-(N-1)} \\
   \Lambda_k^{-(N-2)}  \\
   \vdots  \\
   \Lambda_k^{-1}  \\
   I 
\end{pmatrix}
=
\mathcal{T} 
\begin{pmatrix}
   \Lambda_k^{-(N-1)} \\
   \Lambda_k^{-(N-2)}  \\
   \vdots  \\
   \Lambda_k^{-1}  \\
   I 
\end{pmatrix}
\Lambda_k, \qquad k = 1,\cdots,N.
\end{equation*}
Laying the above equations for all $k=1,\cdots,N$ in a row and dividing the result by $\sqrt{N}$, we get
\begin{equation*}
\mathcal{U} \mathcal{T} \mathcal{S}
=\frac{1}{\sqrt{N}}
\mathcal{T} 
\begin{pmatrix}
   \Lambda_1^{-(N-1)} & \Lambda_2^{-(N-1)} & \cdots & \Lambda_N^{-(N-1)} \\
   \Lambda_1^{-(N-2)} & \Lambda_2^{-(N-2)} & \cdots & \Lambda_N^{-(N-2)} \\
   \vdots & \vdots & \ddots & \vdots \\
   \Lambda_1^{-1} & \Lambda_2^{-1} & \cdots & \Lambda_N^{-1} \\
   I & I & \cdots & I
\end{pmatrix}
\begin{pmatrix}
   \Lambda_1\\
   &\Lambda_2\\
   &&\ddots\\
   &&&\Lambda_N\\
\end{pmatrix}
=\mathcal{T} \mathcal{S} \diag(\Lambda_1, \cdots, \Lambda_n),
\end{equation*}
which concludes the proof.
\end{proof}

By now, it is clear that
\begin{equation} \label{eq:tildeU}
\widetilde{\mathcal{U}} = \mathcal{V}^{-1} \mathcal{T} \mathcal{S} \diag(\Lambda_1, \cdots, \Lambda_N) (\mathcal{T} \mathcal{S})^{-1} \mathcal{V}.
\end{equation}
By Theorem \ref{thm:optimality}, we need to find $\mathcal{V}$ such that the above matrix is normal. This requires us to find $\mathcal{V}$ such that $\mathcal{V}^{-1} \mathcal{T} \mathcal{S}$ is a unitary matrix. Fortunately, one can easily achieve this by the following property of $\mathcal{S}$:
\begin{lemma} \label{lem:S}
The matrix $\mathcal{S}$ defined by \eqref{eq:S} satisfies
\begin{displaymath}
\mathcal{S} \mathcal{S}^\dagger = \diag\left((\Lambda \Lambda^\dagger)^{-\frac{N-1}{N}}, (\Lambda \Lambda^\dagger)^{-\frac{N-2}{N}}, \cdots, (\Lambda \Lambda^\dagger)^{-\frac{1}{N}}, I \right).
\end{displaymath}
\end{lemma}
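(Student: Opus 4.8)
The plan is to compute the block $(i,j)$ entry of $\mathcal{S}\mathcal{S}^\dagger$ directly and show it vanishes off the diagonal while reproducing the claimed powers of $\Lambda\Lambda^\dagger$ on the diagonal. Index the block rows and columns of the $N\times N$ block matrix $\mathcal{S}$ by $i\in\{1,\dots,N\}$ from top to bottom, so that its $(i,k)$ block is $\tfrac{1}{\sqrt N}\Lambda_k^{-(N-i)}$. Since each $\Lambda_k$ is diagonal, conjugate transposition commutes with taking powers, and one gets
\[
(\mathcal{S}\mathcal{S}^\dagger)_{ij} = \frac{1}{N}\sum_{k=1}^N \Lambda_k^{-(N-i)}\bigl(\Lambda_k^\dagger\bigr)^{-(N-j)}.
\]
Every summand is a diagonal $n\times n$ matrix, so it suffices to track the $m$th diagonal entry for each $m\in\{1,\dots,n\}$, namely $\tfrac{1}{N}\sum_{k=1}^N (\mu_m^{(k)})^{-(N-i)}\,\overline{(\mu_m^{(k)})}^{\,-(N-j)}$.

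Next I would exploit the structure of the $N$th roots. Because $\mu_m$ is an eigenvalue of a product of matrices in $SL(n,\mathbb{C})$, it is nonzero (as already observed in the diagonalization of $\mathcal{U}$), so after relabelling its $N$ distinct $N$th roots can be written $\mu_m^{(k)}=\mu_m^{(1)}\,\omega^{\,k-1}$ with $\omega=\exp(2\pi\mathrm{i}/N)$ a primitive $N$th root of unity. Substituting this and using $\overline{\omega}=\omega^{-1}$ to combine the two $\omega$-exponents, the $m$th diagonal entry of $(\mathcal{S}\mathcal{S}^\dagger)_{ij}$ becomes
\[
(\mu_m^{(1)})^{-(N-i)}\,\overline{(\mu_m^{(1)})}^{\,-(N-j)}\cdot\frac{1}{N}\sum_{k=1}^N \omega^{\,(k-1)(i-j)}.
\]
The inner sum is a finite geometric series in the $N$th root of unity $\omega^{\,i-j}$; since $i,j\in\{1,\dots,N\}$ it equals $N$ when $i=j$ and $0$ otherwise, so $(\mathcal{S}\mathcal{S}^\dagger)_{ij}=0$ for $i\neq j$.

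Finally, for $i=j$ the surviving $m$th diagonal entry is $(\mu_m^{(1)})^{-(N-i)}\,\overline{(\mu_m^{(1)})}^{\,-(N-i)}=|\mu_m^{(1)}|^{-2(N-i)}=|\mu_m|^{-2(N-i)/N}$, using $|\mu_m^{(1)}|=|\mu_m|^{1/N}$. Since $\Lambda\Lambda^\dagger=\diag(|\mu_1|^2,\dots,|\mu_n|^2)$, this is precisely the $m$th diagonal entry of $(\Lambda\Lambda^\dagger)^{-(N-i)/N}$; collecting over $m$ gives $(\Lambda\Lambda^\dagger)^{-(N-i)/N}$ for the $i$th diagonal block, which for $i=1,\dots,N$ runs through $(\Lambda\Lambda^\dagger)^{-(N-1)/N},\dots,(\Lambda\Lambda^\dagger)^{-1/N},I$, as asserted. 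I do not anticipate a genuine obstacle here: the only points needing care are the index and exponent bookkeeping in the block product, and the elementary fact that the distinct $N$th roots of a nonzero complex number form one orbit under multiplication by $N$th roots of unity, which is what collapses the $k$-sum to a Kronecker delta.
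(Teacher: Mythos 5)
Your proof is correct and follows essentially the same route as the paper's: compute the blocks of $\mathcal{S}\mathcal{S}^\dagger$ directly, use the fact that the $N$ distinct $N$th roots of $\mu_m$ differ by powers of a primitive $N$th root of unity so that the off-diagonal blocks collapse via a vanishing geometric series, and read off the diagonal blocks from $|\mu_m^{(k)}|=|\mu_m|^{1/N}$. The only (immaterial) difference is that the paper factors each summand as $\Lambda_k^{p-q}(\Lambda_k\Lambda_k^\dagger)^{q}$ before summing, whereas you factor out the $k=1$ root explicitly.
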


\begin{proof}
It is easy to get 
\begin{displaymath}
\begin{aligned}
\mathcal{S} \mathcal{S}^\dagger = & \frac{1}{N}
\begin{pmatrix}
   \Lambda_1^{-(N-1)} & \Lambda_2^{-(N-1)} & \cdots & \Lambda_N^{-(N-1)} \\
   \Lambda_1^{-(N-2)} & \Lambda_2^{-(N-2)} & \cdots & \Lambda_N^{-(N-2)} \\
   \vdots & \vdots & \ddots & \vdots \\
   \Lambda_1^{-1} & \Lambda_2^{-1} & \cdots & \Lambda_N^{-1} \\
   I & I & \cdots & I
\end{pmatrix}
\begin{pmatrix}
   (\Lambda_1^{-(N-1)})^\dagger & (\Lambda_1^{-(N-2)})^\dagger & \cdots & (\Lambda_1^{-1})^\dagger & I \\
   (\Lambda_2^{-(N-1)})^\dagger & (\Lambda_2^{-(N-2)})^\dagger & \cdots & (\Lambda_2^{-1})^\dagger & I \\
   \vdots & \vdots & \ddots & \vdots \\
   (\Lambda_N^{-(N-1)})^\dagger & (\Lambda_N^{-(N-2)})^\dagger & \cdots & (\Lambda_N^{-1})^\dagger & I 
\end{pmatrix} \\
= & \frac{1}{N}
\begin{pmatrix}
   \sum_{k=1}^N \Lambda_k^{-(N-1)} (\Lambda_k^{-(N-1)})^\dagger & 
   \sum_{k=1}^N \Lambda_k^{-(N-1)} (\Lambda_k^{-(N-2)})^\dagger & 
   \cdots &
   \sum_{k=1}^N \Lambda_k^{-(N-1)} (\Lambda_k^{-1})^\dagger &
   \sum_{k=1}^N \Lambda_k^{-(N-1)} \\ 
      \sum_{k=1}^N \Lambda_k^{-(N-2)} (\Lambda_k^{-(N-1)})^\dagger & 
   \sum_{k=1}^N \Lambda_k^{-(N-2)} (\Lambda_k^{-(N-2)})^\dagger & 
   \cdots &
   \sum_{k=1}^N \Lambda_k^{-(N-2)} (\Lambda_k^{-1})^\dagger &
   \sum_{k=1}^N \Lambda_k^{-(N-2)} \\ 
      \vdots & \vdots & \ddots & \vdots & \vdots \\
      \sum_{k=1}^N \Lambda_k^{-1} (\Lambda_k^{-(N-1)})^\dagger & 
   \sum_{k=1}^N \Lambda_k^{-1} (\Lambda_k^{-(N-2)})^\dagger & 
   \cdots &
   \sum_{k=1}^N \Lambda_k^{-1} (\Lambda_k^{-1})^\dagger &
   \sum_{k=1}^N \Lambda_k^{-1} \\ 
      \sum_{k=1}^N (\Lambda_k^{-(N-1)})^\dagger & 
   \sum_{k=1}^N (\Lambda_k^{-(N-2)})^\dagger & 
   \cdots &
   \sum_{k=1}^N (\Lambda_k^{-1})^\dagger &
   \sum_{k=1}^N I
\end{pmatrix}.
\end{aligned}
\end{displaymath}
Now we calculate the matrix blocks. By $\Lambda_k^N = \Lambda$, we know that $(\Lambda_k \Lambda_k^\dagger) = (\Lambda \Lambda^\dagger)^\frac{1}{N}$. For any $p,q=0,-1,\cdots,-(N-1)$ and $p \neq q$,
\begin{displaymath}
 \sum_{k=1}^N \Lambda_k^p  (\Lambda_k^p)^\dagger 
 = \sum_{k=1}^N (\Lambda_k \Lambda_k^\dagger)^p 
 =\sum_{k=1}^N  (\Lambda \Lambda^\dagger)^{\frac{p}{N}}
 = N (\Lambda \Lambda^\dagger)^{\frac{p}{N}},
\end{displaymath}
and
\begin{equation} \label{eq:pq}
  \sum_{k=1}^N \Lambda_k^p  (\Lambda_k^q)^\dagger 
  =  \sum_{k=1}^N \Lambda_k^{p-q}  (\Lambda_k  \Lambda_k^\dagger)^q 
  = \left( \sum_{k=1}^N \Lambda_k^{p-q} \right)(\Lambda  \Lambda^\dagger)^{\frac{q}{N}}.
\end{equation}
On the right-hand side of \eqref{eq:pq}, the sum in the parentheses is
\begin{displaymath}
\sum_{k=1}^N \Lambda_k^{p-q} = \diag \left(\sum_{k=1}^N (\mu_1^{(k)})^{p-q},\cdots, \sum_{k=1}^N (\mu_N^{(k)})^{p-q} \right).
\end{displaymath}
To calculate it, for all $j = 1,\cdots,n$, we write $\mu_j$ as $\mu_j = |\mu_j| \exp (\mathrm{i}\theta_j)$, and assume
\begin{displaymath}
\mu_j^{(k)} = |\mu_j|^{\frac{1}{N}} \exp \left(\mathrm{i}\frac{\theta_j+2k\pi}{N}\right), \qquad k = 1,\cdots,N.
\end{displaymath}
Thus
\begin{displaymath}
\begin{aligned}
\sum_{k=1}^N (\mu_j^{(k)})^{p-q}
& = \sum_{k=1}^N |\mu_j|^{\frac{p-q}{N}} \exp \left(\mathrm{i}\frac{(p-q)(\theta_j+2k\pi)}{N}\right)
= |\mu_j|^{\frac{p-q}{N}} \exp \left(\mathrm{i}\frac{p-q}{N} \theta_j\right) 
\sum_{k=1}^N \exp \left[ \left( \mathrm{i}\frac{2(p-q)\pi}{N}\right) k\right] \\
& = |\mu_j|^{\frac{p-q}{N}} \exp \left(\mathrm{i}\frac{p-q}{N} \theta_j\right) 
\frac{\exp \left( \mathrm{i}\frac{2(p-q)\pi}{N}\right) 
 \bigg(1-\exp \left[ \left( \mathrm{i}\frac{2(p-q)\pi}{N}\right) N\right]\bigg)} 
 {1-\exp \left( \mathrm{i}\frac{2(p-q)\pi}{N}\right) }
 = 0.
\end{aligned}
\end{displaymath}
The above equation indicates that $\sum_{k=1}^N \Lambda_k^p  (\Lambda_k^q)^\dagger = 0$. Therefore
\begin{align*}
\mathcal{S} \mathcal{S}^\dagger 
& =\frac{1}{N} \diag \left( N(\Lambda \Lambda^\dagger)^{\frac{-(N-1)}{N}}, N(\Lambda \Lambda^\dagger)^{-\frac{N-2}{N}}, \cdots, N(\Lambda \Lambda^\dagger)^{-\frac{1}{N}}, 
NI \right) \\
& = \diag\left((\Lambda \Lambda^\dagger)^{-\frac{N-1}{N}}, (\Lambda \Lambda^\dagger)^{-\frac{N-2}{N}}, \cdots, (\Lambda \Lambda^\dagger)^{-\frac{1}{N}}, I \right). \qedhere
\end{align*}
\end{proof}

Based on Lemma \ref{lem:S}, we provide in the following theorem a series of convenient choices of optimal gauge transforms, which are obviously a generalization of the one-link case:

\begin{theorem} \label{thm:opt_cooling}
Suppose $U_N U_1 \cdots U_{N-1}$ is diagonalizable. For any $P_k \in SL(n,\mathbb{C})$, $k=1,\cdots,N$, define
\begin{equation} \label{eq:V_k}
V_k=\left\{
\begin{aligned}
&(U_k \cdots U_{N-1})Q(\Lambda \Lambda^\dagger)^{-\frac{N-k}{2N}} P_k, &k&=1,\cdots,N-1,\\
&Q P_k, &k&=N.
\end{aligned}
\right.
\end{equation}
We have that $\mathcal{V}^{-1} \mathcal{T} \mathcal{S}$ is unitary. In this case,
\begin{displaymath}
\|\{\widetilde{U}\}\| = \sqrt{N \left(\sum_{j=1}^n |\mu_j|^\frac{2}{N} \right)}.
\end{displaymath}
\end{theorem}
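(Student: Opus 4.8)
The plan is to substitute the explicit formula~\eqref{eq:V_k} into the block-diagonal product $\mathcal{V}^{-1}\mathcal{T}$ and watch the link products $U_k\cdots U_{N-1}$ and the factor $Q$ telescope away. Writing $\mathcal{P} = \diag(P_1,\dots,P_N)$ and
\[
\mathcal{D} = \diag\!\left((\Lambda\Lambda^\dagger)^{\frac{N-1}{2N}},\,(\Lambda\Lambda^\dagger)^{\frac{N-2}{2N}},\,\dots,\,(\Lambda\Lambda^\dagger)^{\frac{1}{2N}},\,I\right),
\]
a block-by-block calculation should give $\mathcal{V}^{-1}\mathcal{T} = \mathcal{P}^{-1}\mathcal{D}$. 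The fractional powers here are well defined because $\det(U_N U_1\cdots U_{N-1}) = \prod_k\det U_k = 1$ forces every $\mu_j\neq 0$, so that $\Lambda\Lambda^\dagger = \diag(|\mu_1|^2,\dots,|\mu_n|^2)$ is positive definite; the same determinant count, together with $\det Q = 1$ and $\det P_k = 1$, also shows $\det V_k = 1$, so that~\eqref{eq:V_k} really is an element of $[SL(n,\mathbb{C})]^N$. The observation that makes everything collapse is that, by Lemma~\ref{lem:S}, this $\mathcal{D}$ is precisely $(\mathcal{S}\mathcal{S}^\dagger)^{-1/2}$.

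Given this, set $\mathcal{W} := \mathcal{V}^{-1}\mathcal{T}\mathcal{S} = \mathcal{P}^{-1}(\mathcal{S}\mathcal{S}^\dagger)^{-1/2}\mathcal{S}$, so that
\[
\mathcal{W}\mathcal{W}^\dagger = \mathcal{P}^{-1}(\mathcal{S}\mathcal{S}^\dagger)^{-1/2}(\mathcal{S}\mathcal{S}^\dagger)(\mathcal{S}\mathcal{S}^\dagger)^{-1/2}(\mathcal{P}^{-1})^\dagger = \mathcal{P}^{-1}(\mathcal{P}^\dagger)^{-1} = (\mathcal{P}^\dagger\mathcal{P})^{-1} = I,
\]
the last step holding because $\mathcal{P}$ is unitary (each $P_k$ being so). This proves the unitarity assertion. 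For the norm, I would use~\eqref{eq:tildeU}, which now reads $\widetilde{\mathcal{U}} = \mathcal{W}\,\diag(\Lambda_1,\dots,\Lambda_N)\,\mathcal{W}^\dagger$ with $\mathcal{W}$ unitary. As in the proof of Theorem~\ref{thm:optimality}, the cyclic block structure of $\widetilde{\mathcal{U}}$ gives $\widetilde{\mathcal{U}}\widetilde{\mathcal{U}}^\dagger = \diag(\widetilde{U}_1\widetilde{U}_1^\dagger,\dots,\widetilde{U}_N\widetilde{U}_N^\dagger)$, hence $\|\{\widetilde{U}\}\|^2 = \tr(\widetilde{\mathcal{U}}\widetilde{\mathcal{U}}^\dagger)$; then by unitary invariance of the trace and $\Lambda_k^N = \Lambda$ (so $\Lambda_k\Lambda_k^\dagger = (\Lambda\Lambda^\dagger)^{1/N} = \diag(|\mu_1|^{2/N},\dots,|\mu_n|^{2/N})$),
\[
\|\{\widetilde{U}\}\|^2 = \tr\!\big(\diag(\Lambda_1,\dots,\Lambda_N)\diag(\Lambda_1,\dots,\Lambda_N)^\dagger\big) = \sum_{k=1}^N\tr(\Lambda_k\Lambda_k^\dagger) = N\sum_{j=1}^n|\mu_j|^{2/N},
\]
which is the stated formula after taking square roots.

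Everything is routine matrix bookkeeping; the only step needing care is the first one, keeping the block indices straight through the products in~\eqref{eq:T} and~\eqref{eq:V_k} and handling the real powers of $\Lambda\Lambda^\dagger$. There is no genuine obstacle: the definition~\eqref{eq:V_k} has been reverse-engineered so that $\mathcal{V}^{-1}\mathcal{T}$ is exactly the positive-definite factor $(\mathcal{S}\mathcal{S}^\dagger)^{-1/2}$ in a polar-type splitting of $\mathcal{T}\mathcal{S}$ (up to the harmless unitary $\mathcal{P}$), after which Lemma~\ref{lem:S}, Theorem~\ref{thm:optimality}, and~\eqref{eq:tildeU} do the rest. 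As a sanity check, one recovers $\|\{\widetilde{U}\}\| \geqslant \sqrt{Nn}$ from $\det\Lambda = 1$ and the inequality of arithmetic and geometric means, consistent with Theorem~\ref{thm:F_norm}.
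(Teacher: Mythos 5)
Your proof is correct and follows essentially the same route as the paper's: compute $\mathcal{V}^{-1}\mathcal{T}$ block by block so that the link products and $Q$ cancel, recognize the remaining diagonal factor as $(\mathcal{S}\mathcal{S}^\dagger)^{-1/2}$ via Lemma \ref{lem:S}, and finish with the trace computation using the unitary similarity in \eqref{eq:tildeU}. One remark: your unitarity step genuinely requires each $P_k$ to be unitary, since otherwise $\mathcal{W}\mathcal{W}^\dagger = \mathcal{P}^{-1}\mathcal{P}^{-\dagger} \neq I$; the theorem as stated allows arbitrary $P_k \in SL(n,\mathbb{C})$, for which the conclusion fails. The paper's own proof has the same implicit requirement (and its one-link discussion speaks of an ``arbitrary unitary matrix''), so the hypothesis should read $P_k \in SU(n)$ --- you have correctly identified the condition actually needed rather than introduced a gap.
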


\begin{proof}
With $\mathcal{V} = \diag(V_1,\cdots,V_N)$, it is easy to calculate
\begin{displaymath}
\mathcal{V}^{-1} \mathcal{T} \mathcal{S} (\mathcal{V}^{-1} \mathcal{T} \mathcal{S})^\dagger = \mathcal{V}^{-1} \mathcal{T} (\mathcal{S} \mathcal{S}^\dagger) \mathcal{T}^\dagger \mathcal{V}^{-\dagger} = I  
\end{displaymath}
by Lemma \ref{lem:S} and the equation \eqref{eq:T}.
 
Due to $\mathcal{U} \mathcal{U}^\dagger = \diag(U_1 U_1^\dagger, \cdots, U_N U_N^\dagger)$, we have
\begin{displaymath}
\|\{\widetilde{U}\}\|^2 = \sum_{k=1}^N \tr(\widetilde{U}_k \widetilde{U}_k^\dagger) = \tr(\widetilde{\mathcal{U}} \widetilde{\mathcal{U}}^\dagger).
\end{displaymath}
For $\widetilde{\mathcal{U}} = \mathcal{V}^{-1} \mathcal{T} \mathcal{S} \diag(\Lambda_1, \cdots, \Lambda_N) (\mathcal{V}^{-1} \mathcal{T} \mathcal{S})^\dagger$ with the equation \eqref{eq:tildeU}, there is 
\begin{equation*}
\tr(\widetilde{\mathcal{U}} \widetilde{\mathcal{U}}^\dagger)
= \tr(\Lambda_1 \Lambda_1^\dagger, \cdots, \Lambda_N \Lambda_N^\dagger)
= \tr \left(\sum_{k=1}^N \Lambda_k \Lambda_k^\dagger\right)
= \tr(N(\Lambda \Lambda^\dagger)^\frac{1}{N})
= N \sum_{j=1}^n |\mu_j|^\frac{2}{N}.
\end{equation*}
Yet,
\begin{displaymath}
\|\{\widetilde{U}\}\| = \sqrt{N \left(\sum_{j=1}^n |\mu_j|^\frac{2}{N} \right)}. \qedhere
\end{displaymath}
\end{proof}

In practice, we can simply choose $P_k = I$, and thus
\begin{equation} \label{eq:diagonal}
\widetilde{U}_k = V_k^{-1} U_k V_{k+1}=\left\{
\begin{aligned}
&(\Lambda \Lambda^\dagger)^\frac{1}{2N}, ~ &k&=1,\cdots,N-1,\\
&\Lambda(\Lambda \Lambda^\dagger)^{-\frac{N-1}{2N}}, ~ &k&=N,
\end{aligned}
\right.
\end{equation}
from which we can see that by choosing appropriate gauge transform, all the links are always diagonal. This shows us how the gauge cooling technique helps prevent the field links from excursing too far away from $SU(n)$. Gauge cooling removes redundant degrees of freedom, so that the probability of imaginary excursion is greatly reduced. Therefore gauge cooling can be considered as a method of dimension reduction to enhance the numerical stability. Here we would like to comment that in the multidimensional case, the exact solution no longer has the simple diagonal form as \eqref{eq:diagonal}. But one can still expect the similar effect. In the next section, we are going to present the effect of gauge cooling on the drift term. Before that, we will first pick up a special case left out in the above discussion.

By now, we have only considered the case when the matrix $U_N U_1 \cdots U_{N-1}$ is diagonalizable. When it is not diagonalizable, we consider its Jordan normal form given by
\begin{equation*}
   U_N U_1 \cdots U_{N-1} = Q'
\begin{pmatrix}
   \mu_1&\delta_1 \\
   &\mu_2&\delta_2 \\
   &&\ddots&\ddots \\
   &&&\mu_{n-1}&\delta_{n-1} \\
   &&&&\mu_n
\end{pmatrix}Q'^{-1}
\end{equation*}
where $\mu_m, m=1,2,\cdots,n$ are eigenvalues of $U_N U_1 \cdots U_{N-1}$, and $\delta_m, m=1,2,\cdots,n-1$ are $0$ or $1$. At least one of $\delta_m$, $m = 1,2,\cdots,n-1$ is nonzero. In this case, we choose a positive number $\epsilon \ll 1$ and define $Q = \diag \{1,\epsilon,\cdots,\epsilon^{n-1}\}Q'$. Thus
\begin{equation*}
   U_N U_1 \cdots U_{N-1} = Q
\begin{pmatrix}
   \mu_1&\epsilon \delta_1 \\
   &\mu_2&\epsilon \delta_2 \\
   &&\ddots&\ddots \\
   &&&\mu_{n-1}&\epsilon \delta_{n-1} \\
   &&&&\mu_n
\end{pmatrix}Q^{-1},
\end{equation*}
where the middle matrix on the right-hand side is very close to a diagonal matrix. By scaling $Q'$ properly such that $\det Q = 1$ and choosing $V_k$ according to \eqref{eq:V_k}, the matrix $\widetilde{\mathcal{U}}$ can be very close to a normal matrix. Since $\epsilon$ can be arbitrarily small, the norm $\|\{\widetilde{U}\}\|$ can be arbitrarily close to $\sqrt{N \left(\sum_{j=1}^n |\mu_j|^\frac{2}{N} \right)}$, and the new links $\{\widetilde{U}\}$ can be arbitrarily close to \eqref{eq:diagonal}. In fact, this is the situation when the optimal gauge transform does not exist, and $\sqrt{N \left(\sum_{j=1}^n |\mu_j|^\frac{2}{N} \right)}$ is the infimum of all possible values of $\|\{\widetilde{U}\}\|$. However, in $SL(n,\mathbb{C})$, nondiagonalizable matrices only form a submanifold with zero measure. Therefore, in our implementation, we just set the new matrices to be \eqref{eq:diagonal} without checking the diagonalizablity of $U_N U_1 \cdots U_{N-1}$.

\section{Study of gauge cooling by stochastic differential equations} \label{sec:eig}
To further explore the effect of gauge cooling, we will fix the choice of gauge transforms as \eqref{eq:diagonal} for all time steps. Thereby, all the links are always diagonal matrices, and in fact, these links have only $n-1$ degrees of freedom, which are $\mu_1, \cdots, \mu_{n-1}$ (note that $\mu_1 \cdots \mu_n = 1$). Thus, it is helpful to study the dynamics of these quantities directly, by taking an infinitesimal time step. Below we will first give the general formulation for the $SU(n)$ case, and then focus on the special action \eqref{eq:action} for the $SU(2)$ case.

\subsection{General case in $SU(n)$} \label{sec:SU(n)}
Our analysis in Section \ref{sec:gc_1D} shows that optimal gauge cooling can maintain the following structure of $\{U\}$:
\begin{equation*}
U_k = \left\{
\begin{aligned}
&(\Lambda \Lambda^\dagger)^\frac{1}{2N}, ~ &k&=1,\cdots,N-1,\\
&\Lambda(\Lambda \Lambda^\dagger)^{-\frac{N-1}{2N}}, ~ &k&=N,
\end{aligned}
\right.
\end{equation*}
where $\Lambda = \diag(\mu_1,\cdots,\mu_N)$. Before deriving the dynamics of $\mu_j$, we first review the steps to evolve the links by one time step $\Delta t$:
\begin{enumerate}[1.]
\item Let $U_k^* = \exp \left(-\sum_{a=1}^{n^2-1} \mathrm{i} \lambda_a (K_{ak} \Delta t + \eta_{ak} \sqrt{2\Delta t}) \right) U_k$, where $K_{ak}= D_{ak}S(\{U\})$ and $\eta_{ak}$ obeys the standard normal distribution.
\item Compute the eigenvalues of $U_N^* U_1^* \cdots U_{N-1}^*$, and use $\mu_1^*, \cdots, \mu_N^*$ to denote the results.
\item Update the matrices by setting $U_1, \cdots, U_{N-1}$ to be $\diag(|\mu_1^*|^{\frac{1}{N}}, \cdots, |\mu_n^*|^{\frac{1}{N}})$, and setting $U_N$ to be $\diag(|\mu_1^*|^{-\frac{N-1}{N}} \mu_1^*, \cdots, |\mu_n^*|^{-\frac{N-1}{N}} \mu_n^*)$.
\end{enumerate}
Here the first step is identical to the evolution without gauge cooling, and the second and third steps are an application of equation \eqref{eq:diagonal}, which performs the optimal gauge cooling. Next, we are going to replace $\Delta t$ in the above equation by an infinitesimal time step $\mathrm{d}t$, and $\eta_{ak} \sqrt{2\Delta t}$ will be replaced by $\mathrm{d}w_{ak}$ correspondingly. Such a limit can be more easily taken if we consider the above process as the Euler-Maruyama method for It\^o SDE (see e.g. \cite[Section 3.6]{Grinfeld2014} for the description of numerical methods for It\^o and Stratonovich SDEs). Therefore in this section, we will switch to It\^o's representation, where the circle sign ``$\circ$'' in front of $\mathrm{d}w_{ak}$ is removed. For easier understanding, below we are going to perform a formal derivation of the SDEs for $\mu_j$, $j = 1,\cdots,n$, where notations such as $\mathrm{d}t$ and $\mathrm{d}w_{ak}$ will be used directly to replace $\Delta t$ and $\eta_{ak} \sqrt{2\Delta t}$, and all the terms with magnitude smaller than $\mathrm{d}t$ will be discarded without explicit approximation. We claim that such a process can be made rigorous following the classical error analysis for the Euler-Maruyama method without much difficulty \cite{Kloeden1992}.

We first write down the infinitesimal version of Step 1 using It\^o calculus:
\begin{equation*}
\begin{split}
  U_k^* 
  & = \exp \left(-\sum_{a=1}^{n^2-1} \mathrm{i} \lambda_a (K_{ak}\,\mathrm{d}t + \mathrm{d}w_{ak}) \right) U_k
    = \left( I - \sum_{a=1}^{n^2-1} \mathrm{i} \lambda_a (K_{ak}\,\mathrm{d}t + \mathrm{d}w_{ak}) - \sum_{a=1}^{n^2-1} \lambda_a^2 \,\mathrm{d}t \right) U_k \\
  & = \left( I - \sum_{a=1}^{n^2-1} \mathrm{i} \lambda_a (K_{ak}\,\mathrm{d}t + \mathrm{d}w_{ak}) - \frac{2(n^2-1)}{n} I \,\mathrm{d}t \right) U_k, \qquad k = 1,\cdots,N,
\end{split}
\end{equation*}
where we have used $(\mathrm{d} w_{ak})^2 = 2\,\mathrm{d}t$ and
\begin{displaymath}
\sum_{a=1}^{n^2-1} \lambda_a^2 = \frac{2(n^2-1)}{n} I,
\end{displaymath}
which can be found in \cite{Greiner2007}. Step 2 requires us to compute the product of these matrices:
\begin{equation} \label{eq:prod_U*}
\begin{split}
U_N^* U_1^* \cdots U_{N-1}^* &=
  \left( I - \sum_{a=1}^{n^2-1} \mathrm{i} \lambda_a (K_{aN}\mathrm{d}t + \mathrm{d}w_{aN}) -  \frac{2(n^2-1)}{n} I \mathrm{d}t  \right) \Lambda(\Lambda \Lambda^\dagger)^{-\frac{N-1}{2N}} \\
  & \qquad \times \left( I - \sum_{a=1}^{n^2-1} \mathrm{i} \lambda_a (K_{a1}\mathrm{d}t + \mathrm{d}w_{a1}) -  \frac{2(n^2-1)}{n}  I \mathrm{d}t \right) (\Lambda \Lambda^\dagger)^\frac{1}{2N} \cdots \\
  & \qquad \times \left( I - \sum_{a=1}^{n^2-1} \mathrm{i} \lambda_a (K_{a,N-1}\mathrm{d}t + \mathrm{d}w_{a,N-1} ) -  \frac{2(n^2-1)}{n} I \mathrm{d}t \right) (\Lambda \Lambda^\dagger)^\frac{1}{2N} \\
&= \Lambda - \mathrm{i} \sum_{a=1}^{n^2-1} \sum_{k=1}^N W_{ak} \,\mathrm{d}w_{ak} - \left(
  \mathrm{i} \sum_{a=1}^{n^2-1} \sum_{k=1}^N K_{ak} W_{ak} + \frac{2N(n^2-1)}{n} \Lambda
\right) \mathrm{d}t,
\end{split}
\end{equation}
where
\begin{equation*}
W_{ak} = \left\{
\begin{aligned}
&\Lambda (\Lambda \Lambda^\dagger)^{-\frac{N-k}{2N}} \lambda_a (\Lambda \Lambda^\dagger)^\frac{N-k}{2N} , \quad &k&=1,\cdots,N-1, \\
&\lambda_a \Lambda, \quad &k&=N.
\end{aligned}
\right.
\end{equation*}
Next, we need to find all its eigenvalues. Suppose $r_j^*$ is the unit eigenvector associated with the eigenvalue $\mu_j^*$:
\begin{equation} \label{eq:eigen*}
U_N^* U_1^* \cdots U_{N-1}^* r_j^* = \mu_j^* r_j^*, \qquad j = 1,\cdots,n.
\end{equation}
By \eqref{eq:prod_U*}, as $U_N^* U_1^* \cdots U_{N-1}^*$ is a perturbation of the diagonal matrix $\Lambda$, the eigenvalue $\mu_j^*$ should be the perturbation of the diagonal entry $\mu_j$, and $r_j^*$ should be the perturbation of the canonical basis $e_j$. Assume
\begin{equation} \label{eq:ev}
\mu_j^* =  \mu_j + \sum_{a=1}^{n^2-1} \sum_{k=1}^{N} c_j^{ak} \mathrm{d} w_{ak} + \sigma_j \mathrm{d}t, 
\quad r_j^* = e_j + \sum_{a=1}^{n^2-1} \sum_{k=1}^{N} h_j^{ak} \mathrm{d} w_{ak} + p_j \mathrm{d}t,
\qquad j = 1,\cdots,n.
\end{equation}
Substituting \eqref{eq:ev} and \eqref{eq:prod_U*} into \eqref{eq:eigen*} yields
\begin{equation*} 
\begin{aligned}
& \sum_{a=1}^{n^2-1} \sum_{k=1}^{N} (\Lambda h_j^{ak} -  \mathrm{i} W_{ak} e_j) \,\mathrm{d}w_{ak} + \left( \Lambda p_j - \mathrm{i} \sum_{a=1}^{n^2-1} \sum_{k=1}^{N} K_{ak} W_{ak} e_j -2 \mathrm{i} \sum_{a=1}^{n^2-1} W_{ak}h_j^{ak} -\frac{2N(n^2-1)}{n}  \Lambda e_j \right) \mathrm{d}t \\
=& \sum_{a=1}^{n^2-1} \sum_{k=1}^{N} (\mu_j h_j^{ak} + c_j^{ak} e_j) \,\mathrm{d}w_{ak} + \left( 2 \sum_{a=1}^{n^2-1}\sum_{k=1}^{N} c_j^{ak} h_j^{ak} + \sigma_j e_j + \mu_j p_j \right) \mathrm{d}t.
\end{aligned}
\end{equation*}
Now we equate the coefficients of the same infinitesimal terms to get
\begin{equation} \label{eq:eigdw}
\Lambda h_j^{ak} - \mathrm{i} W_{ak} e_j = \mu_j h_j^{ak} + c_j^{ak} e_j,\quad \forall a,k,
\end{equation}
and
\begin{equation} \label{eq:eigdt}
\Lambda p_j - \mathrm{i} \sum_{a=1}^{n^2-1} \sum_{k=1}^{N} K_{ak} W_{ak} e_j -2 \mathrm{i} \sum_{a=1}^{n^2-1} W_{ak}h_j^{ak} -\frac{2N(n^2-1)}{n} \Lambda e_j
= 2 \sum_{a=1}^{n^2-1}\sum_{k=1}^{N} c_j^{ak} h_j^{ak} + \sigma_j e_j + \mu_j p_j, \quad \forall a,k.
\end{equation}
For the equation \eqref{eq:eigdw}, we multiply both sides by $e_j^T$ on the left, and obtain
\begin{equation*}
e_j^T \Lambda h_j^{ak} - \mathrm{i} \mu_j e_j^T \lambda_a e_j 
= \mu_j e_j^T h_j^{ak} + c_j^{ak}.
\end{equation*}
Note that $e_j^T \Lambda = e_j^T \mu_j$. We get $c_j^{ak}$ by cancelling out the terms $\mu_j e_j^T h_j^{ak}$ on both sides:
\begin{equation} \label{coff:dw}
c_j^{ak} = - \mathrm{i} \mu_j e_j^T \lambda_a e_j.
\end{equation}
Because $c_j^{ak}$ is independent of $k$, we rewrite it as $c_j^a$ hereafter. If we multiply both sides of \eqref{eq:eigdw} by $e_l^T$ on the left for $l \neq j$, the result is
\begin{equation*}
(\mu_j-\mu_l) e_l^T h_j^{ak} = - \mathrm{i} e_l^T W_{ak} e_j.
\end{equation*}
When $\mu_j \neq \mu_l$, we have
\begin{equation*}
(h^{ak}_j)_l = \frac{\mathrm{i} e_l^T W_{ak} e_j}{\mu_l - \mu_j}
=\left\{
\begin{aligned}
&\frac{\mathrm{i} |\mu_j|^\frac{N-k}{N} |\mu_l|^{-\frac{N-k}{N}} \mu_l e_l^T \lambda_a e_j}{\mu_l - \mu_j}, \quad &k&=1,\cdots,N-1;\\
&\frac{\mathrm{i} \mu_j e_l^T \lambda_a e_j}{\mu_l - \mu_j}, \quad &k&=N;
\end{aligned}
\right.   \qquad l \neq j, 
\end{equation*}
where $(h^{ak}_j)_l$ denotes the $l$th component of $h^{ak}_j$. In order to determine the $j$th component of $h^{ak}_j$, we impose the following two conditions:
\begin{enumerate}
\item $(r_j^*)^\dagger r_j^* = 1$;
\item The $j$th component of $r_j^*$ is real.
\end{enumerate}
Since $r_j^*$ is a perturbation of $e_j$, the $j$th component of $r_j^*$ must be nonzero. Thus the above two constraints fix the eigenvector. By the first constraint, we have
\begin{equation*}
\begin{aligned}
1 = (r_j^*)^\dagger r_j^*
& = \left(e_j + \sum_{a=1}^{n^2-1} \sum_{k=1}^{N} h_j^{ak} \mathrm{d} w_{ak} + p_j \mathrm{d}t \right)^\dagger \left(e_j + \sum_{a=1}^{n^2-1} \sum_{k=1}^{N} h_j^{ak} \mathrm{d} w_{ak} + p_j \mathrm{d}t \right) \\
& = 1 + 2 \sum_{a=1}^{n^2-1} \re\left(\sum_{k=1}^{N} e_j^T h_j^{ak}\right) \mathrm{d} w_{ak}
+ \left(2 \re(e_j^T p_j) + 2 \sum_{a=1}^{n^2-1} \sum_{k=1}^{N} (h_j^{ak})^\dagger h_j^{ak} \right) \mathrm{d}t.
\end{aligned}
\end{equation*}
Therefore,
\begin{equation*}
\re(e_j^Th_j^{ak}) = 0, \quad \text{i.e.} \quad \re((h_j^{ak})_j)=0.
\end{equation*}
The second constraint requires that the imaginary part of $(h_j^{ak})_j$ be zero as well. Thus $(h_j^{ak})_j=0$. Now, we can write $h_j^{ak}$ explicitly as
\begin{equation*}
\begin{aligned}
h_j^{ak} & =  \mathrm{i} |\mu_j|^\frac{N-k}{N} \bigg{(} \frac{ |\mu_1|^{-\frac{N-k}{N}} \mu_1 e_1^T \lambda_a e_j}{\mu_1 - \mu_j}, \cdots,  
\frac{|\mu_{j-1}|^{-\frac{N-k}{N}} \mu_{j-1} e_{j-1}^T \lambda_a e_j}{\mu_{j-1} - \mu_j}, 0 , \\
& \qquad \qquad \qquad \qquad \frac{ |\mu_{j+1}|^{-\frac{N-k}{N}} \mu_{j+1} e_{j+1}^T \lambda_a e_j}{\mu_{j+1} - \mu_j}, \cdots, 
\frac{|\mu_n|^{-\frac{N-k}{N}} \mu_n e_n^T \lambda_a e_j}{\mu_n - \mu_j} \bigg{)}^T\\
& = \mathrm{i} |\mu_j|^\frac{N-k}{N} (\Lambda \Lambda^\dagger)^{-\frac{N-k}{2N}} \Lambda \diag \bigg{(}\frac{1}{\mu_1 - \mu_j}, \cdots,  \frac{1}{\mu_{j-1} - \mu_j}, 0 , \frac{1}{\mu_{j+1} - \mu_j}, \cdots, \frac{1}{\mu_n - \mu_j} \bigg{)} \lambda_a e_j,
\end{aligned}
\end{equation*}
where $k=1,\cdots,N-1$, and
\begin{equation*}
\begin{aligned}
h_j^{aN} & = \mathrm{i} \mu_j \bigg{(} \frac{e_1^T \lambda_a e_j}{\mu_1 - \mu_j},\cdots, \frac{ e_{j-1}^T \lambda_a e_j}{\mu_{j-1} - \mu_j}, 0 ,
 \frac{e_{j+1}^T \lambda_a e_j}{\mu_{j+1} - \mu_j}, \cdots, 
 \frac{e_N^T \lambda_a e_j}{\mu_N - \mu_j} \bigg{)}^T \\
& = \mathrm{i} \mu_j \diag \bigg{(} \frac{1}{\mu_1 - \mu_j}, \cdots, \frac{1}{\mu_{j-1} - \mu_j}, 0, \frac{1}{\mu_{j+1} - \mu_j}, \cdots, \frac{1}{\mu_N - \mu_j} \bigg{)} \lambda_a e_j.
\end{aligned}
\end{equation*}
By these results, we can find $\sigma_j$ by multiplying both sides of \eqref{eq:eigdt} by $e_j^T$ on the left:
\begin{equation} \label{eq:sigma_j}
\begin{aligned}
\sigma_j  & = - \mathrm{i} \mu_j \sum_{a=1}^{n^2-1} \sum_{k=1}^{N} K_{ak}  e_j^T \lambda_a e_j - 2 \mathrm{i} \sum_{a=1}^{n^2-1} \sum_{k=1}^{N} e_j^T W_{ak} h^{ak}_j - \frac{2N(n^2-1)}{n} \mu_j \\
&  = - \mathrm{i} \mu_j \sum_{a=1}^{n^2-1} \sum_{k=1}^{N} K_{ak}  e_j^T \lambda_a e_j + 2 N \mu_j \sum_{a=1}^{n^2-1} e_j^T \lambda_a \Omega_j \lambda_a e_j - \frac{2N(n^2-1)}{n} \mu_j,
\end{aligned}
\end{equation}
where 
\begin{displaymath}
\Omega_j = \diag \left( \frac{\mu_1}{\mu_1 - \mu_j}, \cdots, \frac{\mu_{j-1}}{\mu_{j-1} - \mu_j}, 0, \frac{\mu_{j+1}}{\mu_{j+1} - \mu_j}, \cdots, \frac{\mu_N}{\mu_N - \mu_j} \right).
\end{displaymath}
To summarize, we insert \eqref{coff:dw} and \eqref{eq:sigma_j} into \eqref{eq:ev}, which gives the value of $\mu_j^*$:
\begin{equation*}
\mu_j^* =  \mu_j - \sum_{a=1}^{n^2-1} \sum_{k=1}^{N} \mathrm{i} \mu_j e_j^T \lambda_a e_j \,\mathrm{d} w_{ak} 
- \left[  \mathrm{i} \mu_j \sum_{a=1}^{n^2-1} \sum_{k=1}^{N} K_{ak}  e_j^T \lambda_a e_j - 2 N \mu_j \sum_{a=1}^{n^2-1} e_j^T \lambda_a \Omega_j \lambda_a e_j + \frac{2N(n^2-1)}{n} \mu_j \right] \mathrm{d}t.
\end{equation*}
By now, Step 2 is accomplished.

Step 3 simply means that $\mu_j^*$ is just $\mu_j + \mathrm{d}\mu_j$. Hence,
\begin{equation} \label{eq:dmu}
\mathrm{d} \mu_j = - \sum_{a=1}^{n^2-1} \sum_{k=1}^{N} \mathrm{i} \mu_j e_j^T \lambda_a e_j \,\mathrm{d} w_{ak} 
- \left[ \mathrm{i} \mu_j \sum_{a=1}^{n^2-1} \sum_{k=1}^{N} K_{ak}  e_j^T \lambda_a e_j -2 N \mu_j \sum_{a=1}^{n^2-1} e_j^T \lambda_a \Omega_j \lambda_a e_j + \frac{2N(n^2-1)}{n} \mu_j \right] \mathrm{d}t.
\end{equation}
To make further simplification, we define
\begin{displaymath}
\mathrm{d}w_a = \frac{1}{N} \sum_{k=1}^N \mathrm{d}w_{ak},
  \quad K_a = \frac{1}{N} \sum_{k=1}^N K_{ak}, \qquad a = 1,\cdots,n^2-1.
\end{displaymath}
Thus \eqref{eq:dmu} can be rewritten as
\begin{equation} \label{eq:dmu_final}
\frac{1}{N} \mathrm{d} \mu_j = -\sum_{a=1}^{n^2-1} \mathrm{i} \mu_j e_j^T \lambda_a e_j \,\mathrm{d} w_a
- \left[ \mathrm{i} \mu_j \sum_{a=1}^{n^2-1} K_a e_j^T \lambda_a e_j - 2 \mu_j \sum_{a=1}^{n^2-1} e_j^T \lambda_a \Omega_j \lambda_a e_j + \frac{2(n^2-1)}{n} \mu_j \right] \mathrm{d}t.
\end{equation}
This equation shows that by gauge cooling, the one-dimensional $N$-link model is essentially equivalent to the one-link model, as is known for the exact solution \cite{Seiler2013}. The optimal gauge cooling automatically utilizes this property, and therefore greatly reduces the difficulty of the simulation. This analysis further confirms the nature of dimension reduction for the gauge cooling technique from another point of view, which also applies in the multi-dimensional cases.

Some other interesting properties can be observed from \eqref{eq:dmu_final}. For example, we can see that only $n-1$ drift terms are active for the Langevin process. This is clear for $n=2$ since the generators of $SU(2)$ are just Pauli matrices (below we add the superscript $(n)$ to matrices $\lambda_a$ for clarification):
\begin{equation*}
   \lambda_1^{(2)} = 
\begin{pmatrix}
   0&1 \\
   1&0
\end{pmatrix},\quad
   \lambda_2^{(2)} = 
\begin{pmatrix}
   0&\mathrm{i} \\
   -\mathrm{i}&0
\end{pmatrix},\quad
   \lambda_3^{(2)} = 
\begin{pmatrix}
   1&0 \\
   0&-1
\end{pmatrix},
\end{equation*}
and we have $e_j^T \lambda_1^{(2)} e_j \equiv 0$ and $e_j^T \lambda_2^{(2)} e_j \equiv 0$, which means that both $K_1$ and $K_2$ are multiplied by zero in \eqref{eq:dmu_final}. When $n=3$, the generators of $SU(3)$ are Gell-Mann matrices:
\begin{equation} \label{eq:Gell-Mann}
\begin{gathered}
\lambda_a^{(3)} = 
\begin{pmatrix}
   \lambda_a^{(2)} \\
   &0
\end{pmatrix},\quad a=1,2,3, \\
  \lambda_4^{(3)} \!=\!
\begin{pmatrix}
   0&0&1\\
   0&0&0\\
   1&0&0
\end{pmatrix}, ~~
  \lambda_5^{(3)} \!=\!
\begin{pmatrix}
   0&0&\mathrm{i}\\
   0&0&0\\
   -\mathrm{i}&0&0
\end{pmatrix}, ~~
   \lambda_6^{(3)} \!=\!
\begin{pmatrix}
   0&0&0\\
   0&0&1\\
   0&1&0
\end{pmatrix}, ~~
   \lambda_7^{(3)} \!=\!
\begin{pmatrix}
   0&0&0\\
   0&0&\mathrm{i}\\
   0&-\mathrm{i}&0
\end{pmatrix}, ~~
   \lambda_8^{(3)} \!= \frac{1}{\sqrt{3}}
\begin{pmatrix}
   1&0&0\\
   0&1&0\\
   0&0&-2
\end{pmatrix},
\end{gathered}
\end{equation}
from which one sees that $e_j^T \lambda_a^{(3)} e_j \equiv 0$ if $a \neq 3$ and $a \neq 8$. Therefore only $K_3$ and $K_8$ are active. Likewise, for $n\geqslant 3$, the first $n(n-2)$ generators of $SU(n)$ can be chosen as $\lambda_a^{(n)} = \diag(\lambda_a^{(n-1)}, 0)$. For the rest generators, the only one with nonzero diagonal entries is \cite{Georgi1999}
\begin{displaymath}
\lambda_{n^2-1}^{(n)} = \sqrt{\frac{2}{n(n-1)}} \diag(1,\cdots,1,1-n).
\end{displaymath}
Therefore, by mathematical induction, the number of active drift terms in \eqref{eq:dmu_final} equals $n-1$.

In \eqref{eq:dmu_final}, except the term with $K_a$, the other two drift terms are independent of the action $S(\{U\})$. These terms also help us stabilize the complex Langevin process, as will be demonstrated by a specific example in the next subsection.

\subsection{Polyakov loop model for the $SU(2)$ theory} \label{sec:SU(2)}
This subsection will be devoted to a special case study. For easier demonstration, we choose $n=2$ since in this case, there is only one degree of freedom, namely $\mu_1$ (or $\mu_2$), as stated in the begining of Section \ref{sec:eig}. By taking this choice, it is much easier to understand the complex Langevin dynamics, since one can explicitly plot the flow field and the distribution of samples on the complex plane, giving a clear illustration of possible issues. Such a method has been widely used in existing investigations of the complex Langevin method \cite{Aarts2013,Nagata2016,Scherzer2019}.

Based on the discussion of the previous subsection, we just need to consider the one-link model with $N=1$. Since $\mu_1 \mu_2 = 1$, here we only study the dynamics of $\mu_1$. For simplicity, we omit the subscript and write $\mu_1$ as $\mu$. Thus, by equation \eqref{eq:dmu}, we can write down the SDE for $\mu$ as follows:
\begin{equation} \label{eq:dmu2}
\mathrm{d}\mu = - \mathrm{i} \mu \,\mathrm{d}w_3 - \left( \mathrm{i} \mu K_3 + \frac{4 \mu}{\mu^2-1} + 3\mu \right) \mathrm{d}t.
\end{equation}
For such a representation, $U \in SU(2)$ corresponds to $\mu \in U(1)$. Therefore it is natural to make the change of variable $\mu = \exp(-\mathrm{i} s)$ and consider the SDE for $s$, which is
\begin{equation} \label{eq:ds2}
\mathrm{d}s = \left( K_3 + 2\mathrm{i} \, \frac{1+\mu^2}{1-\mu^2} \right) \,\mathrm{d}t + \mathrm{d}w_3
  = \left( K_3 + 2 \cot s \right) \,\mathrm{d}t + \mathrm{d}w_3.
\end{equation}
The above equation can be verified by It\^o's calculus:
\begin{displaymath}
\mathrm{d}\mu = \mathrm{d} \big( \exp(-\mathrm{i}s) \big) = -\mathrm{i} \mu \,\mathrm{d}s - \mu \,\mathrm{d}t
  = -\mathrm{i} \mu \,\mathrm{d}w_3 - \mu \left( \mathrm{i} K_3 - \frac{2(1+\mu^2)}{1-\mu^2} + 1 \right) \,\mathrm{d}t,
\end{displaymath}
which agrees with \eqref{eq:dmu2}. Note that $\mu$ is $2\pi$-periodic with respect to the real part of $s$. Therefore in \eqref{eq:ds2}, we need to consider $s(t)$ as a stochastic process defined on the cylinder $(\re s(t), \im s(t)) \in \mathbb{S}^1 \times \mathbb{R}$. When $K_3 = 0$, the flow field given by the drift term is plotted in Figure \ref{Fig:flow_k0}, from which we can see that when $s$ is not real, the flow pushes $s$ toward the real axis, meaning that the cotangent term in \eqref{eq:ds2} also helps stabilize the complex Langevin method. This also implies that when $K_3$ does not provide a strong repulsion from the real axis, the variable $s$ can stay in a region near the real axis, leading to a localized distribution. Below we are going to show this behavior rigorously for the action \eqref{eq:action}.

\begin{figure}[!ht]
\begin{overpic}[width=8cm]{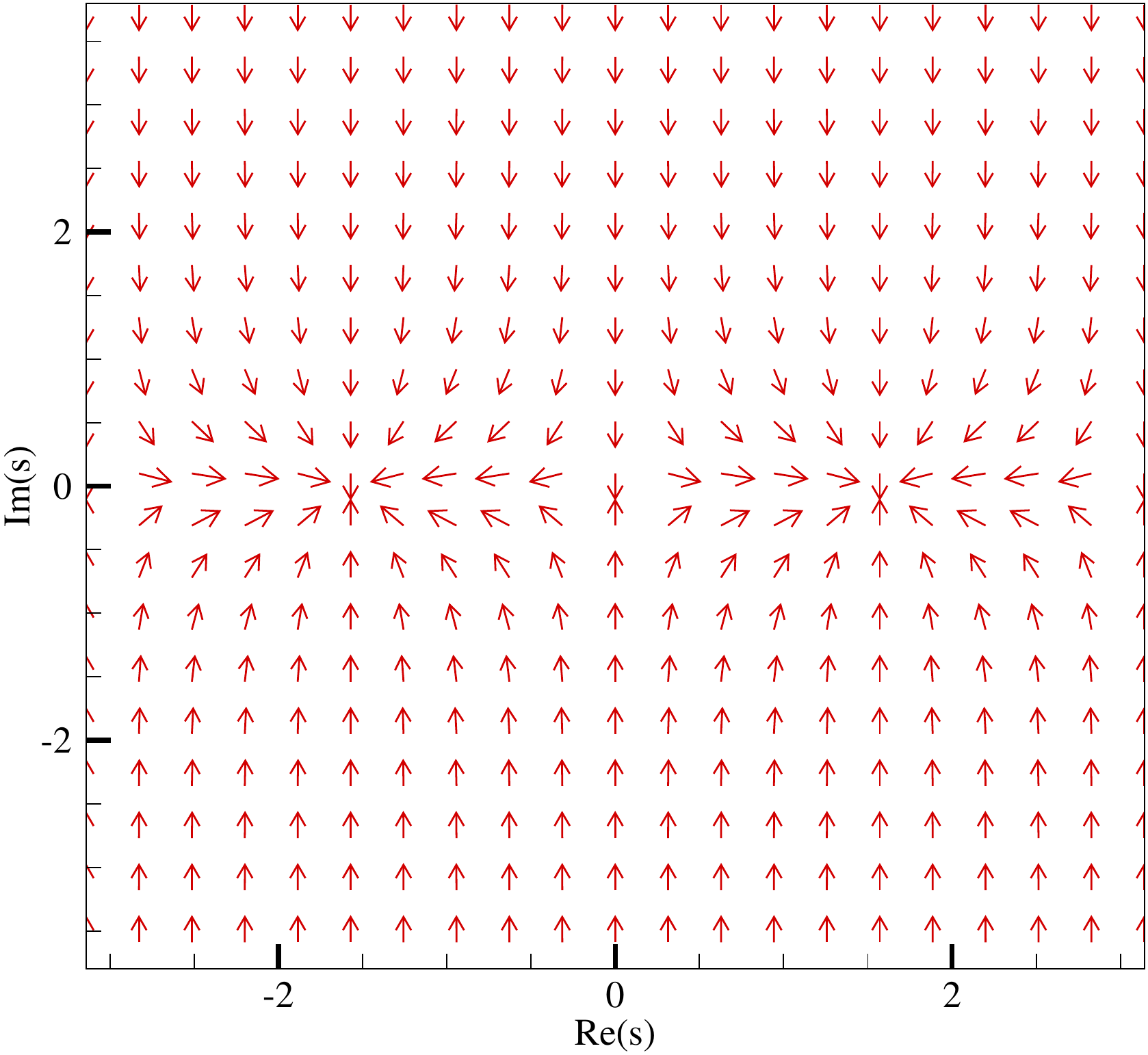}
\end{overpic}
\caption{The flow field $v(s) = 2\cot s$} \label{Fig:flow_k0}
\end{figure}

From the action $S(U)$ given by \eqref{eq:action} and the definition of the Lie derivative \eqref{eq:Lie_derivative}, it is easy to get
\begin{equation*}
K_3 = D_3 S(U) = -\mathrm{i} \beta_1 \tr \left[ \diag \left(\mu,\frac{1}{\mu}\right)\lambda_3^{2} \right] + \mathrm{i} \beta_2 \tr \left[\diag\left(\frac{1}{\mu}, \mu\right) \lambda_3^{2} \right]
= \mathrm{i} (\beta_1 + \beta_2) \left( \frac{1}{\mu} - \mu \right).
\end{equation*}
For simplicity, we let $\mathrm{d}w =\mathrm{d} w_3$ and $\beta = \beta_1 + \beta_2$. Noting that $\mu^{-1} - \mu = 2\mathrm{i} \sin s$, we can rewrite the equation \eqref{eq:dmu2} as
\begin{equation} \label{eq:ds_Polyakov}
\mathrm{d}s = 2 (-\beta \sin s + \cot s) \,\mathrm{d}t + \mathrm{d}w.
\end{equation}
To write down the above equation more explicitly, we let $s = x + \mathrm{i}y$ and $\beta = A + \mathrm{i} B$, where $x,y,A,B \in \mathbb{R}$. Then $K_3$ can be expanded as
\begin{equation}
\begin{split}
K_3 &= 2 \left( - A \cosh y \sin x + B \sinh y \cos x + \frac{\sin 2x}{\cosh 2y - \cos 2x} \right) \\
& \quad - 2 \mathrm{i} \left( A \sinh y \cos x + B \cosh y \sin x + \frac{\sinh 2y}{\cosh 2y - \cos 2x} \right).
\end{split}
\end{equation}
Below we use $K_R$ and $K_I$ to denote the real and imaginary parts of $K_3$. Then the SDE \eqref{eq:ds_Polyakov} is equivalent to the following 2D SDEs:
\begin{equation} \label{eq:ds}
\left\{
\begin{aligned}
\mathrm{d}x &= K_R \,\mathrm{d}t + \mathrm{d}w = 2 \left( - A \cosh y \sin x +B \sinh y \cos x + \frac{\sin 2x}{\cosh 2y - \cos 2x} \right) \mathrm{d}t + \mathrm{d}w,\\
\mathrm{d}y &= K_I \,\mathrm{d}t = - 2 \left( A \sinh y \cos x + B \cosh y \sin x + \frac{\sinh 2y}{\cosh 2y - \cos 2x} \right) \mathrm{d}t,
\end{aligned}
\right.
\end{equation}
where we remind the readers that $x$ is a $2\pi$-periodic variable and $w$ is the real Brownian motion.

Below we would like to claim that when $S(U)$ is small, in the complex Langevin process \eqref{eq:ds}, the imaginary part of $s$ cannot excurse to infinity due to the cotangent term in \eqref{eq:ds2} if the initial $y$ is zero. This is detailed in the following theorem: 
 
\begin{theorem} \label{Thm:beta}
Suppose $|A| < \frac{3\sqrt{3}}{2}$. There exists $\epsilon_A > 0$ such that when $B \in (-\epsilon_A, \epsilon_A)$, we can find constants $C_1$ and $C_2$ satisfying all the following conditions:
\begin{itemize}
\item $C_2 > C_1 \geqslant 0$;
\item $K_I < 0$ for all $x \in \mathbb{R}$ and $y \in (C_1, C_2)$;
\item $K_I > 0$ for all $x \in \mathbb{R}$ and $y \in (-C_2, -C_1)$.
\end{itemize}
Here $K_I$ is given in the second equation of \eqref{eq:ds}.
\end{theorem}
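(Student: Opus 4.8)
The plan is to work with the function
$\Psi(x,y):=A\sinh y\cos x+B\cosh y\sin x+\dfrac{\sinh 2y}{\cosh 2y-\cos 2x}$,
so that $K_I<0\iff\Psi>0$ and $K_I>0\iff\Psi<0$. First I would record the reflection identity $\Psi(x,-y)=-\Psi(-x,y)$, which follows by inspecting the parities of the three terms. Since $x$ ranges over all of $\mathbb{R}$, this identity shows that $\Psi(\cdot,y_0)>0$ everywhere is \emph{equivalent} to $\Psi(\cdot,-y_0)<0$ everywhere. Hence it suffices to produce $0\le C_1<C_2$ with $\Psi(x,y)>0$ for all $x$ and all $y\in(C_1,C_2)$; the negativity on $(-C_2,-C_1)$ is then automatic, and so is the sign requirement on $K_I$.

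For $y>0$ I would use $\cosh 2y-\cos 2x=2(\cosh^2 y-\cos^2 x)$ and $\sinh 2y=2\sinh y\cosh y$ to factor $\Psi(x,y)=\sinh y\,\phi(x,y)+B\cosh y\sin x$, where $\phi(x,y):=A\cos x+\dfrac{\cosh y}{\cosh^2 y-\cos^2 x}$. The crux is a uniform lower bound $\phi(x,y)\ge\mu_0>0$ valid for all $x$ and all $y\in[0,\delta]$ with $\delta$ small. The starting point is the boundary value $\phi(x,0)=A\cos x+\dfrac1{\sin^2 x}$: in the unfavorable sign regime, writing $\cos x=-t$ reduces positivity to $1>|A|\,(t-t^3)$ for $t\in[0,1]$, and since $\max_{[0,1]}(t-t^3)=\frac{2}{3\sqrt3}$ (attained at $t=1/\sqrt3$, e.g.\ by AM--GM applied to $2t^2,\,1-t^2,\,1-t^2$) this holds precisely because $|A|<\frac{3\sqrt3}{2}$. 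Next, $\phi(x,y)$ is decreasing in $y\ge0$ for each fixed $x$ (the map $u\mapsto u/(u^2-c^2)$ is decreasing for $u\ge1>|c|$), so it is enough to show $\min_x\phi(x,\delta)>0$ for $\delta$ small; this I would obtain by a compactness/contradiction argument: a minimizing sequence $(x_n,\delta_n)$ with $\delta_n\to0$ either accumulates at some $x^*\ne k\pi$, where $\phi$ is continuous and positive, or at $x^*=k\pi$, where $\cosh^2\delta_n-\cos^2 x_n\to0^+$ forces $\phi(x_n,\delta_n)\to+\infty$. Fixing such a $\delta$ and setting $\mu_0:=\min_x\phi(x,\delta)>0$, monotonicity in $y$ gives $\phi(x,y)\ge\mu_0$ throughout $[0,\delta]$.

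With this bound the general $B$ case is short. For $y\in(0,\delta]$ one has $\Psi(x,y)\ge\mu_0\sinh y-|B|\cosh y=\cosh y\,(\mu_0\tanh y-|B|)$, which is strictly positive exactly when $y>\arctanh(|B|/\mu_0)$, provided $|B|<\mu_0$. So I would set $\epsilon_A:=\mu_0\tanh\delta$, and for $B\in(-\epsilon_A,\epsilon_A)$ take $C_1:=\arctanh(|B|/\mu_0)$ and $C_2:=\delta$. Then $C_2>C_1\ge0$, $\Psi>0$ on $(C_1,C_2)$ for every $x$ (so $K_I<0$ there), and the reflection identity yields $K_I>0$ on $(-C_2,-C_1)$.

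The main obstacle is the uniform lower bound for $\phi$ near the singular locus $\{(k\pi,0)\}$: $\phi(\cdot,0)$ blows up there, so one cannot just invoke uniform continuity on a compact set, and one must check that this blow-up persists---rather than the minimum escaping to $0$---as $y\to0^+$. The compactness/contradiction argument together with monotonicity of $\phi$ in $y$ is what handles this cleanly; everything else is bookkeeping with hyperbolic identities and the elementary inequality $\max_{[0,1]}(t-t^3)=\frac{2}{3\sqrt3}$.
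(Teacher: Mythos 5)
Your proposal is correct. It shares the paper's essential ingredient --- the elementary maximization $\max_{t\in[0,1]}(t-t^3)=\tfrac{2}{3\sqrt3}$, which is exactly where the threshold $|A|<\tfrac{3\sqrt3}{2}$ enters in both arguments --- but the surrounding architecture is genuinely different. The paper changes variables to $\xi=\cos x$, $\eta=-\tanh y$, recasts $K_I>0$ as a polynomial inequality in $\xi$, characterizes for $B=0$ the full admissible band $\eta^2<1-\tfrac{2A}{3\sqrt3}$, and then obtains $\epsilon_A$, $C_1$, $C_2$ by a continuity/perturbation appeal around a single interior point $\eta_0$ of that band (so its $C_1$ is strictly positive and the constants are not explicit); this formulation also directly motivates the function $f(A,B)$ used to compute Figure~\ref{Fig:AB}. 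You instead stay in the $(x,y)$ variables, use a single reflection identity in place of the paper's three symmetries, factor $\Psi=\sinh y\,\phi+B\cosh y\sin x$, and establish a \emph{uniform} lower bound $\phi\geqslant\mu_0>0$ on a strip $0\leqslant y\leqslant\delta$ via monotonicity of $u\mapsto u/(u^2-c^2)$ together with a compactness/contradiction argument that correctly handles the singular points $(k\pi,0)$ (where $\phi$ blows up to $+\infty$ rather than letting the minimum escape to $0$). This buys you explicit constants $\epsilon_A=\mu_0\tanh\delta$, $C_1=\arctanh(|B|/\mu_0)$, $C_2=\delta$, and a more quantitative treatment of the $B$-perturbation than the paper's informal continuity step; the trade-off is that you only certify a neighborhood of the real axis rather than the sharp band of $y$-values the paper's coordinates expose. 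Both routes are valid proofs of the stated theorem.
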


\begin{proof}
By the definition of $K_I$, we see that it depends on $x,y,A,B$, and it satisfies the following identities:
\begin{equation*}
K_I(-x,-y;A,B) = K_I(x+\pi,-y;-A,B) = K_I(\pi-x,y;A,-B)= -K_I(x,y;A,B).
\end{equation*}
Such a symmetry allows us to focus only on the case with $y<0$, $A \geqslant0$ and $B>0$. Let $\xi = \cos x$ and $\eta = -\tanh y$ which satisfy $-1 \leqslant \xi \leqslant 1$ and $0<\eta<1$. Using these new variables, we can rewrite the inequality $K_I > 0$ in the following equivalent form:
\begin{equation} \label{eq:ineqKI}
\sqrt{\frac{1}{1-\eta^2}} (-A\eta \xi + B) < \frac{\eta}{1-\xi^2(1-\eta^2)}.
\end{equation}
Below we consider the cases $A=0$ and $A>0$ separately.

If $A=0$, in order that \eqref{eq:ineqKI} holds for all $x \in \mathbb{R}$, we need to ensure that
\begin{equation} \label{eq:A=0}
B \sqrt{\frac{1}{1-\eta^2}} < \min_{\xi \in [-1,1]} \frac{\eta}{1-\xi^2(1-\eta^2)} = \eta.
\end{equation}
Clearly, the above inequality holds for $B = 0$ and any $\eta = 1/2$. Thus, by the continuity of both sides of \eqref{eq:A=0}, we can find a neighborhood of $(0,1/2)$, say $(-\epsilon_0, \epsilon_0) \times (1/2-\delta, 1/2+\delta)$, such that \eqref{eq:A=0} still holds, which indicates $K_I > 0$. Thus, by the relation between $y$ and $\eta$, the constants $C_1$ and $C_2$ can be found by $C_1 = \arctanh (1/2-\delta)$ and $C_2 = \arctanh (1/2+\delta)$.

If $A>0$, the equation \eqref{eq:ineqKI} can be written as
\begin{equation} \label{eq:ABineq}
\xi^3(1-\eta^2)-\xi - \frac{B}{A\eta}\xi^2(1-\eta^2) + \frac{B}{A\eta} < \frac{1}{A} \sqrt{1-\eta^2}.
\end{equation}
Similar to the previous case, we just need to find some $\eta > 0$ such that the above inequality holds for $B=0$ and any $\xi \in [-1,1]$. This condition is equivalent to
\begin{equation} \label{eq:A>0}
\max_{\xi \in [-1,1]} [\xi^3(1-\eta^2)-\xi] < \frac{1}{A} \sqrt{1-\eta^2}.
\end{equation}
When $0 < \eta < 1$, it is not difficult to work out that $\max\limits_{\xi \in [-1,1]} [\xi^3(1-\eta^2)-\xi] = \frac{2}{3}\sqrt{\frac{1}{3(1-\eta^2)}}$. Thus a simple rearrangement of \eqref{eq:A>0} yields
\begin{equation*}
\eta^2 < 1 - \frac{2A}{3\sqrt{3}}.
\end{equation*}
By the assumption $0 < A < 3\sqrt{3}/2$, we know that the right-hand side of the above equation is positive, and therefore a positive $\eta$ satisfying \eqref{eq:A>0} exists. The existence of $\epsilon_A$, $C_1$ and $C_2$ follows the same argument as in the case $A = 0$.
\end{proof}

Theorem \ref{Thm:beta} shows that when $|A| < 3\sqrt{3}/2$ and $B \in (-\epsilon_A, \epsilon_A)$, the solution of \eqref{eq:ds} satisfies $y(\eta) \in (-C_2, C_2)$ if the initial condition satisfies $y(0) \in (-C_2, C_2)$. Recall that the formal justification of the complex Langevin method requires that $y(0) = 0 \in (-C_2, C_2)$. Therefore Theorem \ref{Thm:beta} does provide us a safe region of parameters, for which the imaginary part cannot excurse out of the striplike area $|y| < C_2$. A more precise description of this safe region can be obtained by \eqref{eq:ABineq}, which inspires us to define
\begin{displaymath}
f(A,B) = \inf_{\eta \in (0,1)} \max_{\xi \in [-1,1]}
  \left( \xi^3(1-\eta^2)-\xi - \frac{B}{A\eta}\xi^2(1-\eta^2) + \frac{B}{A\eta} - \frac{1}{A} \sqrt{1-\eta^2} \right),
\end{displaymath}
so that for any given $A \in (-3\sqrt{3}/2, 3\sqrt{3}/2)$, the maximum value of $\epsilon_A$ satisfies $f(A, \epsilon_A) = 0$. By this argument, the range of $A$ and $B$ which localizes the distribution function can be numerically computed, and we plot such a region in Figure \ref{Fig:AB}.
\begin{figure}[!ht]
\begin{overpic}[width=8cm]{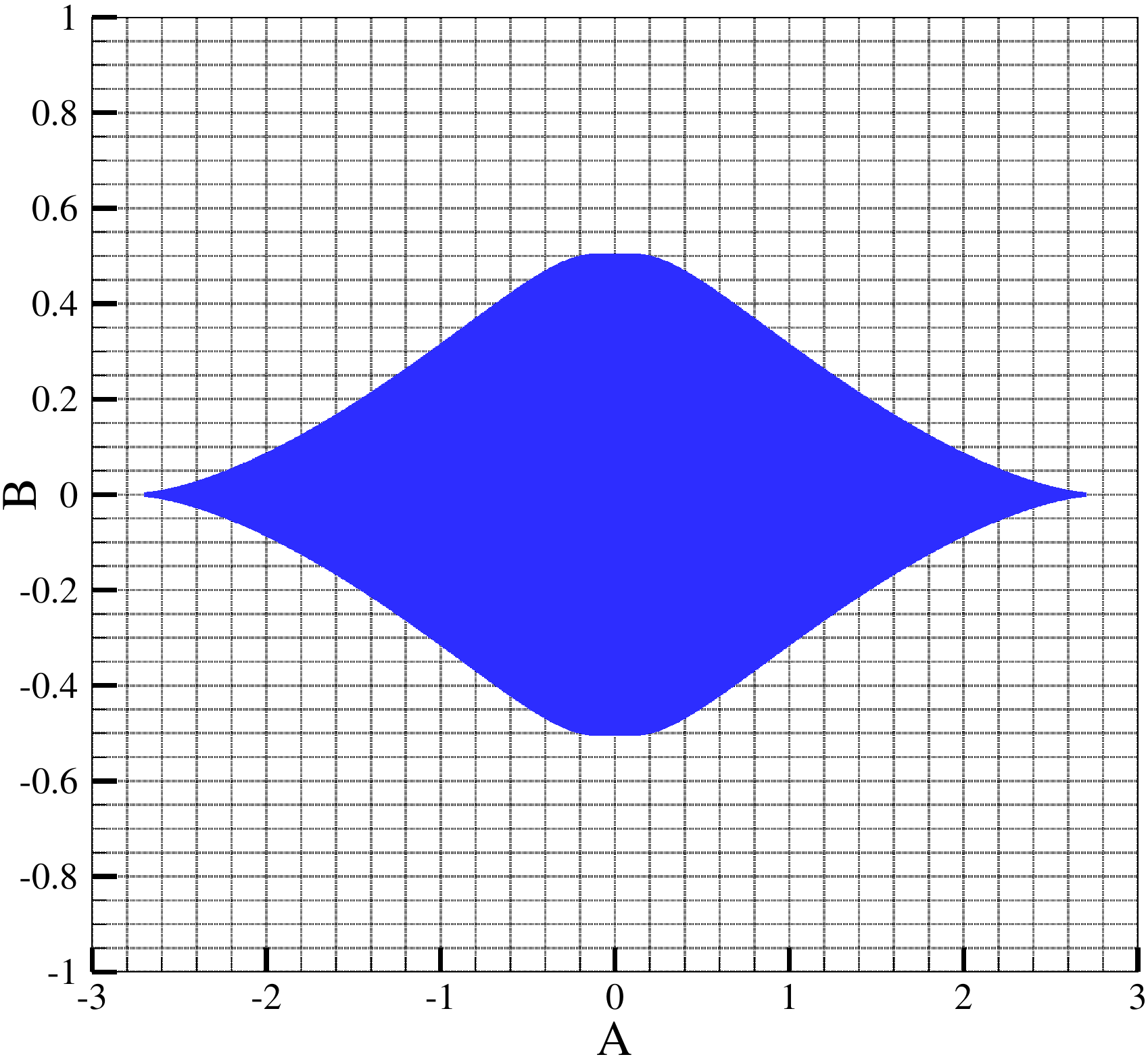}
\end{overpic}
\caption{The region for $(A,B)$ in which the distribution function is localized.} \label{Fig:AB}
\end{figure}
Such a localizing phenomenon has been studied in \cite{Aarts2013}, where the striplike region is the result of the competition of two parts of the action: one part of the action attracts the Langevin process toward the real axis, while the other part repulses the process away from the real axis. In our work, we have shown that gauge cooling provides an additional attractive drift, which helps the formation of the confining strip even if the action does not contain an attractive part. 

One side effect of this advantage of gauge cooling is the existence of singularity at points $s = k\pi$ for $k \in \mathbb{Z}$, corresponding to the case $\mu_1 = \mu_2$. For instance, the SDE around $s = 0$ behaves asymptotically like 
\begin{displaymath}
\left\{ \begin{aligned}
\mathrm{d}x &= \frac{2x}{x^2+y^2} \,\mathrm{d}t + \mathrm{d}w, \\
\mathrm{d}y &= -\frac{2y}{x^2+y^2} \,\mathrm{d}t.
\end{aligned} \right.
\end{displaymath}
Such a velocity field does not even fit in the $L^2$ space, causing that most techniques to prove the ergodicity fail to work. Theoretically, it is yet unclear whether such a singularity will lead to the failure of the complex Langevin method, while our numerical experiments seem to suggest its mildness.

\section{Numerical examples} \label{sec:examples}

In this section, we give two examples to validate our numerical analysis. In the first example, we apply different gauge cooling methods to the $SU(3)$ theory to compare their results; in the second example, we simulate \eqref{eq:ds} directly for different parameters to examine the convergence behaviors.

\subsection{Polyakov loop model for $SU(3)$ theory}

In this example, we apply the complex Langevin method with gauge cooling to the Polyakov loop model \eqref{eq:action} for $SU(3)$ theory. The aim is to show the effectiveness of optimal gauge cooling in reducing the norm $\|\{U\}\|$. Following \cite{Seiler2013}, we choose $\beta_1=\beta+\kappa e^\mu$ and $\beta_2 = \bar{\beta}+\kappa e^{-\mu}$ in \eqref{eq:action}, and set $\beta = 2$, $\kappa = 0.1$ and $\mu = 1$. In the following numerical experiments, the following three methods will be tested:
\begin{itemize}
\item Complex Langevin with no gauge cooling.
\item Complex Langevin with optimal gauge cooling given by \eqref{eq:diagonal}.
\item Complex Langevin with gauge cooling implemented by gradient descent method \cite{Seiler2013}.
\end{itemize}
In the last method, we follow \cite{Seiler2013} to assume that $V_{ak}$ are purely imaginary ($V_{ak} = \mathrm{i} Y_{ak}$), and the gradient has been given in the second equation of \eqref{eq:gradient}. Thus every gradient descending step is given by
\begin{equation} \label{eq:gc_gd}
Y_{ak} \leftarrow -2 \tilde{\alpha} \tr[\lambda_a(U_k U_k^{\dagger} - U_{k-1}^{\dagger} U_{k-1})],
  \qquad U_k \leftarrow \exp \left(\sum_{a=1}^8 Y_{ak} \lambda_a \right) U_k \exp \left(-\sum_{a=1}^8 Y_{a,k+1} \lambda_a \right).
\end{equation}
Here $\lambda_a$ is the Gell-Mann matrices given in \eqref{eq:Gell-Mann}, and $\tilde{\alpha}$ indicates the step length. Since gauge cooling is to be done after every time step, and the number of time steps may be large, here we apply neither sophisticated line search technique nor accurate stopping criterion. The step length is simply chosen as $\tilde{\alpha} = \alpha \Delta t$ for some constant $\alpha$. After every time step of complex Langevin, the gradient-descent based gauge cooling \eqref{eq:gc_gd} is applied five times. Two choices of $\alpha$ ($0.4$ and $1.0$) are tested.

In our numerical tests, we set the Langevin time step to be $\Delta t = 2 \times 10^{-5}$, and we apply the above methods until $t = 10$. The evolution of the norm $\|\{U\}\|$ for both cases $N=16$ and $N=32$ is provided in Figure \ref{Fig:gs}, where the horizontal axis is the Langevin time, and the vertical axis represents the value of $\Delta F := \|\{U\}\|^2 - N n$, which is zero if every $U_k$ is unitary. Figure \ref{Fig:gs} only provides one realization of the complex Langevin process in each case. In our experiments, despite the existence of the randomness, no essential difference can be observed in different runs. Figure \ref{Fig:gs} shows that if no gauge cooling is used, the numerical method fails due to a rapidly increasing $\Delta F$, and all other three lines show converging results. For $N=16$, the gradient descent methods show a quite good performance. The value of $\Delta F$ are well controlled below $10^{-2}$ for all time steps, and two different step lengths $\alpha = 0.4$ and $\alpha = 1.0$ show very similar results. However, when $N$ increases to $32$, the step length given by $\alpha = 1.0$ shows much better performance. When $t > 7$, even for $\alpha = 1.0$, the value of $\Delta F$ can reach a number larger than $1$. Contrarily, if the optimal gauge cooling is used, the value of $\Delta F$ is just oscillating around $10^{-6}$, which confirms our statement that the process is essentially independent of $N$ if the optimal gauge cooling is used. These facts indicate that when the number of links is large, one may need to design a more efficient optimization method to suppress the norm $\|\{U\}\|$, so that the power of gauge cooling can be freed maximally.

\begin{figure}[!ht]
\includegraphics[width=8cm]{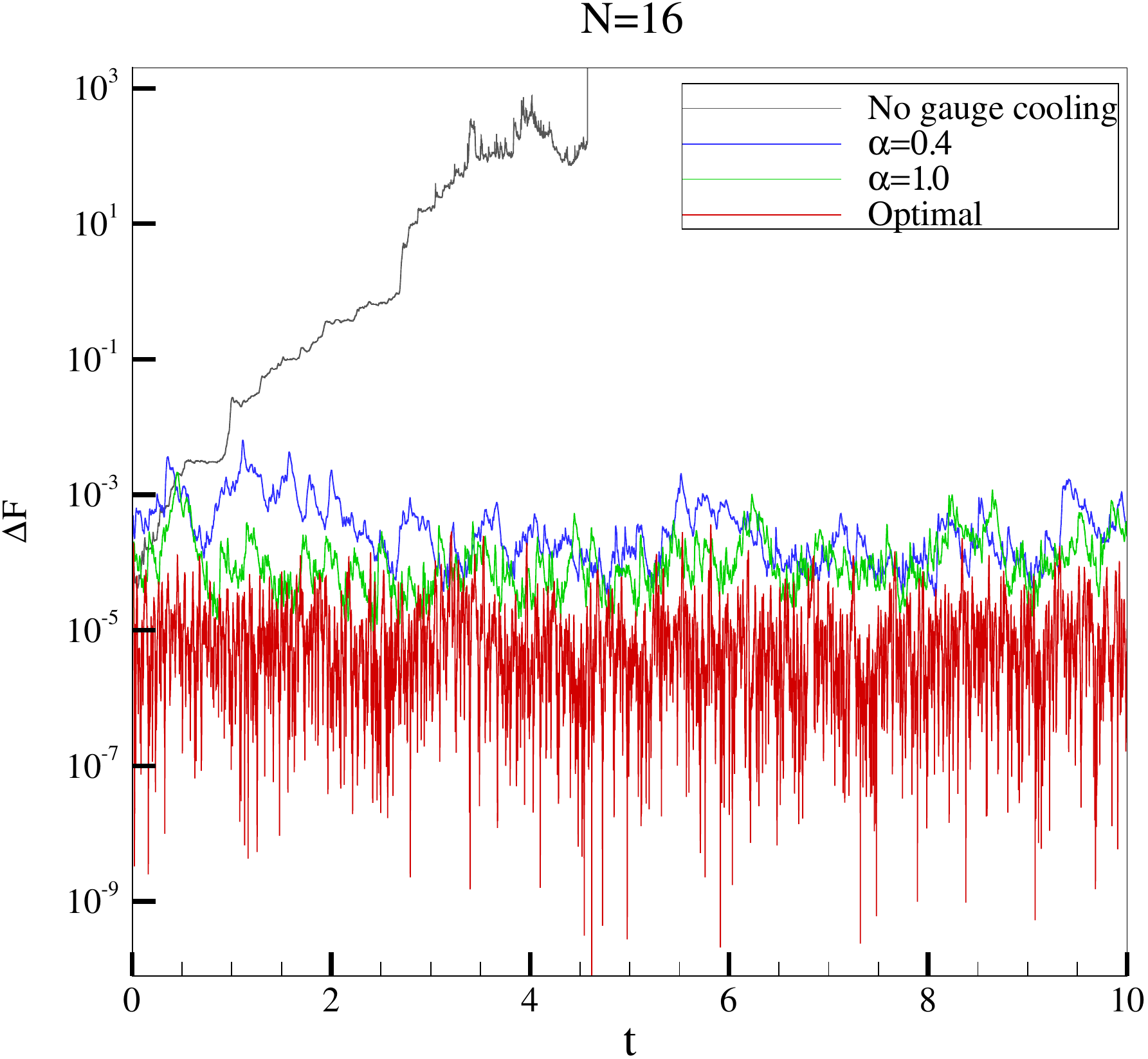}  \quad
\includegraphics[width=8cm]{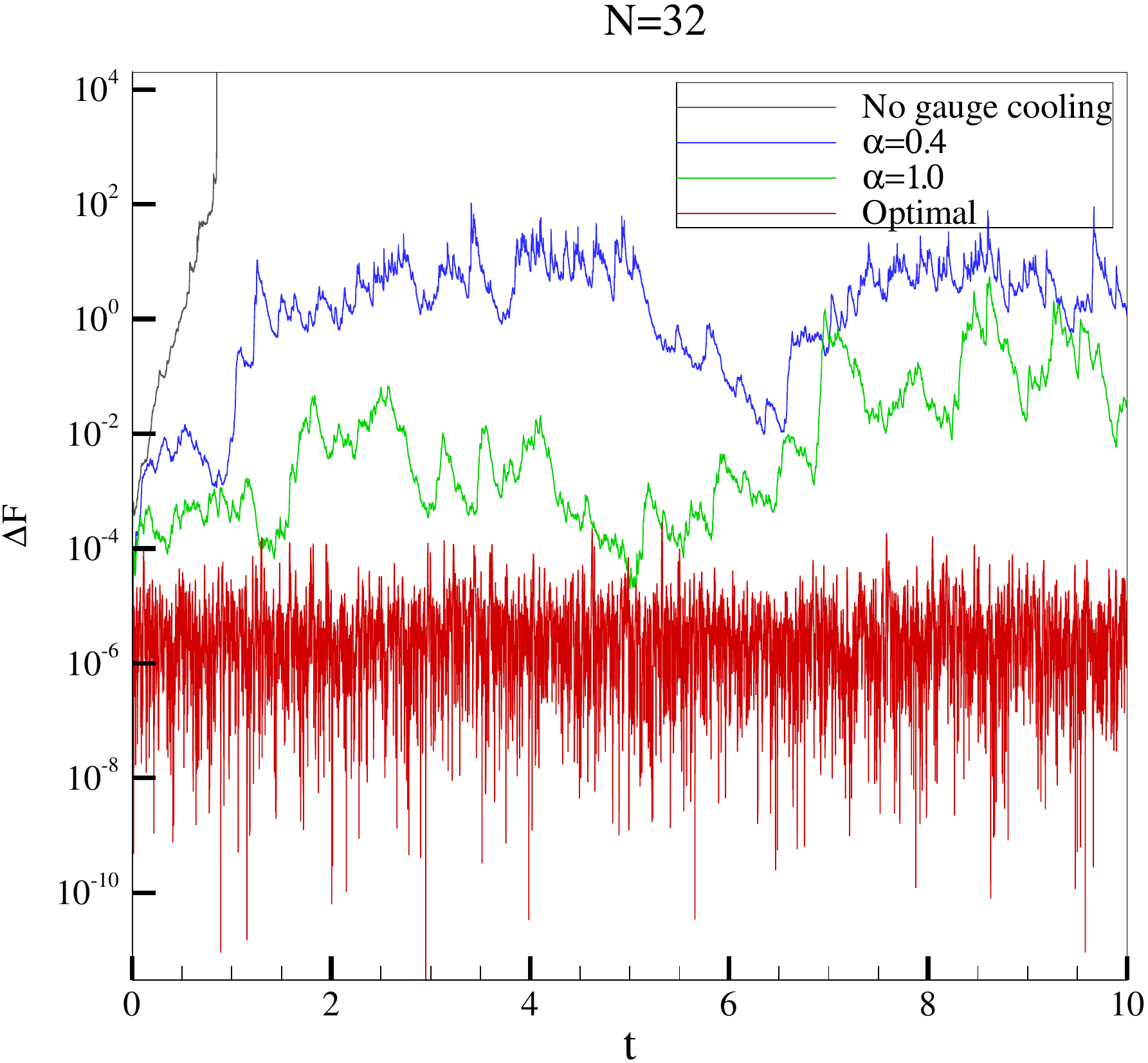}
\caption{The evolution of $\Delta F$ with different gauge cooling techniques}   \label{Fig:gs}
\end{figure}

To verify the correctness of the numerical method, we have calculated the observables $O_k(\{U\}) = \tr\left[(U_1 \cdots U_N)^k\right]$ for $k=\pm1,\pm2,\pm3$. Because the Polyakov loop model reduces to a one-link integral analytically \cite{Seiler2013}, we can calculate the exact values of these observables by considering the case $N=1$. For any $U \in SU(3)$, its eigenvalues can be written as $e^{\mathrm{i}\phi_1},e^{\mathrm{i}\phi_2}$ and $e^{-\mathrm{i}(\phi_1+\phi_2)}$ where $\phi_1,\phi_2 \in (-\pi,\pi]$. By Weyl's integral theorem \cite{Hall2015}, when $N=1$, we can calculate the integrals $\langle O_k \rangle$ as
\begin{equation*}
\langle O_k \rangle = \frac{1}{\eta Z} \int_{-\pi}^\pi \int_{-\pi}^\pi 
(e^{\mathrm{i}k\phi_1}+e^{\mathrm{i}k\phi_2}+e^{-\mathrm{i}k(\phi_1+\phi_2)}) 
\exp (-\widetilde{S}(\phi_1,\phi_2))  |V(\phi_1,\phi_2)|^2
\mathrm{d}\phi_1 \mathrm{d}\phi_2,
\end{equation*} 
where $\eta$ is a normalization constant given by
\begin{equation*}
\eta = \int_{-\pi}^\pi \int_{-\pi}^\pi |V(\phi_1,\phi_2)|^2 \mathrm{d}\phi_1 \mathrm{d}\phi_2,
\end{equation*} 
and
\begin{gather*}
\widetilde{S}(\phi_1,\phi_2) = - \beta_1(e^{\mathrm{i}k\phi_1}+e^{\mathrm{i}k\phi_2}+e^{-\mathrm{i}k(\phi_1+\phi_2)}) - \beta_2 (e^{-\mathrm{i}k\phi_1}+e^{-\mathrm{i}k\phi_2}+e^{\mathrm{i}k(\phi_1+\phi_2)}), \\
V(\phi_1,\phi_2) = \det
\begin{pmatrix}
   1&1&1 \\
   e^{\mathrm{i}k\phi_1}&e^{\mathrm{i}k\phi_2}&e^{-\mathrm{i}k(\phi_1+\phi_2)} \\
   e^{2\mathrm{i}k\phi_1}&e^{2\mathrm{i}k\phi_2}&e^{-3\mathrm{i}k(\phi_1+\phi_2)}
\end{pmatrix}
=\left[ e^{\mathrm{i}k\phi_2}-e^{\mathrm{i}k\phi_1}\right]
\left[ e^{-\mathrm{i}k(\phi_1+\phi_2)}-e^{\mathrm{i}k\phi_1}\right]
\left[ e^{-\mathrm{i}k(\phi_1+\phi_2)}-e^{\mathrm{i}k\phi_2}\right], \\
Z = \frac{1}{\eta} \int_{-\pi}^\pi \int_{-\pi}^\pi 
\exp (-\widetilde{S}(\phi_1,\phi_2))  |V(\phi_1,\phi_2)|^2
\mathrm{d}\phi_1 \mathrm{d}\phi_2.
\end{gather*} 
Thus, the value of $\langle O_k \rangle$ can be obtained by two-dimensional numerical integration, which is used to verify the results from stochastic simulations. In Table \ref{tab:observable}, some results have been listed. Because the value of $\langle O_k \rangle$ is real, we just list the real part of the numerical result. Note that the results for CLM without gauge cooling is not listed, since the computation always fails due to the rapidly growing norm of $\|\{U\}\|$. For $N=16$, all the three methods give reasonable approximations of the exact integral value, while for $N=32$, we observe large deviations from the exact values when the gauge cooling is insufficient ($\alpha = 0.4$). In fact, we even experienced divergence in our test runs when $\alpha = 0.4$, which again emphasizes the importance to use an effective gauge cooling method.

\begin{table}[!ht]
\centering
\caption{Numerical results for the observable $\langle O_k \rangle$} \label{tab:observable}
\begin{tabular}{cccccccccc}
\hline 
& & \hspace{10pt} & \multicolumn{3}{c}{$N=16$} & \hspace{10pt} & \multicolumn{3}{c}{$N=32$} \\
\hline
$k$ & Exact & & $\alpha=0.4$ & $\alpha=1.0$ & Optimal & &
  $\alpha=0.4$ & $\alpha=1.0$ & Optimal \\
\hline
$1$ & $2.0957$ & & $2.1001$ & $2.1042$ & $2.1017$ & &
$2.1394$ & $2.0900$ & $2.0966$ \\
$-1$ & $2.1026$ & & $2.1070$ & $2.1109$ & $2.1083$ & &
$2.1422$  & $2.0958$ & $2.1033$ \\
$2$ & $0.3761$ & & $0.3850$ & $0.3894$ & $0.3775$ & &
$0.6548$  & $0.3583$ & $0.3750$ \\
$-2$ & $0.4092$ & & $0.4182$ & $0.4224$ & $0.4100$ & &
$0.5463$  & $0.3874$ & $0.4078$ \\
$3$ & $-0.5269$ & & $-0.5270$ & $-0.5272$ & $-0.5482$ & &
$-9.0467$  & $-0.5514$ & $-0.5247$ \\
$-3$ & $-0.4800$ & & $-0.4803$ & $-0.4802$ & $-0.5014$ & &
$-2.2547$ & $-0.4938$ & $-0.4778$ \\
\hline
\end{tabular} 
\end{table}

\subsection{Optimal gauge cooling in the $SU(2)$ theory} \label{sec:ex2}
In this example, we focus on the behavior of equation \eqref{eq:ds} for different values of $A$ and $B$. For all simulations, we start with the initial value $x=0.5$ and $y=0$, and evolve the process with time step $\Delta t = 10^{-5}$. The first $3 \times 10^5$ steps are considered as ``unsteady calculations'', and afterwards, we proceed the evolution by $10^8$ steps, in which the samples are drawn at every $10^4$ steps. 

The observables we compute are still $O_k(U) = \tr(U^k)$, where $U = \diag(\mathrm{e}^{-\mathrm{i}s},\mathrm{e}^{\mathrm{i}s})$. The results for $k = 1,2,3$ are provided in Table \ref{tab:ex2}, and we also plot the distributions of the samples $(x,y)$ in Figure \ref{Fig:eig}. The following four choices of $A$ and $B$ are considered in our results:
\begin{itemize}
\item $A = 1$ and $B = 0.2$: In this case, the conclusion of Theorem \ref{Thm:beta} holds, and therefore all the samples are confined in the strip-like region. The simulation is stable and provides reliable values of the observables.
\item $A = 1$ and $B = 2$: The conclusion of Theorem \ref{Thm:beta} does not hold in this case, so the possible values of $y$ can be arbitrary large. The numerical values of the observables also show an interesting behavior. When $k$ increases, the numerical result shows larger and larger deviation from the exact value. In fact, different simulations generate very different results for $k=2$ and $k=3$, while the results for $k=1$ seem stable in different runs. This behavior may be due to insufficient decay in the invariant measure, which is studied in detail in \cite{Scherzer2019}.
\item $A = 5$ and $B = 1$: Since $A > 3\sqrt{3}/2$, Theorem \ref{Thm:beta} does not hold. However, the value of $B$ is relatively small compared with $A$, which provides only a small channel to allow $y$ to drift far from the real axis. Therefore, in our numerical results, the distribution of the samples still looks as if they are confined. The simulation is again stable and the numerical results look sound.
\item $A = 5$ and $B = 10$: This looks like a severe case since we have both large $A$ and large $B$. However, the samples are distributed well around the origin, showing a clear existence of the invariance measure. It shows that the complex Langevin method may work well beyond our theory, which also requires future works.
\end{itemize}

\begin{table}[!ht]
\centering
\caption{Numerical results for the observable $\langle O_k \rangle$}
\label{tab:ex2}
\begin{tabular}{ccccccc}
\hline 
 & \hspace{10pt} & \multicolumn{2}{c}{$A=1,B=0.2$} & \hspace{10pt} & \multicolumn{2}{c}{$A=1,B=2$} \\
\hline
$k$ & & Exact & Numerical & & Exact & Numerical \\
\hline
$1$ & & $0.8759 + 0.1300\mathrm{i}$ & $0.8569+0.1332\mathrm{i}$ & & $1.7449 + 0.5495\mathrm{i}$ & $1.5447+0.5269\mathrm{i}$ \\
$2$ & & $-0.6017 + 0.1304\mathrm{i}$ & $-0.6304+0.1298\mathrm{i}$ & & $0.2936 + 1.7642\mathrm{i}$ & $-0.1957+0.9220\mathrm{i}$ \\
$3$ & & $-0.7562 - 0.0652\mathrm{i}$ & $-0.7543-0.0738\mathrm{i}$ & & $-2.1127 + 1.2080\mathrm{i}$ & $20.6438-34.4723\mathrm{i}$ \\
\hline
\hline
& \hspace{10pt} & \multicolumn{2}{c}{$A=5,B=1$} & \hspace{10pt} & \multicolumn{2}{c}{$A=5,B=10$} \\
\hline
$k$ & & Exact & Numerical & & Exact & Numerical \\
\hline
$1$ & & $1.7189 + 0.0544\mathrm{i}$ & $1.7187+0.0545\mathrm{i}$ & & $1.9390 + 0.1188\mathrm{i}$ & $1.9395+0.1195\mathrm{i}$ \\
$2$ & & $1.0021 + 0.1669\mathrm{i}$ & $1.0013+0.1674\mathrm{i}$ & & $1.7388 + 0.4511\mathrm{i}$ & $1.7406+0.4543\mathrm{i}$ \\
$3$ & & $0.1531 + 0.2341\mathrm{i}$ & $0.1513+0.2345\mathrm{i}$ & & $1.3565 + 0.9231\mathrm{i}$ & $1.3601+0.9318\mathrm{i}$ \\
\hline
\end{tabular} 
\end{table}

\begin{figure}[!ht]
\includegraphics[width=7.5cm,clip]{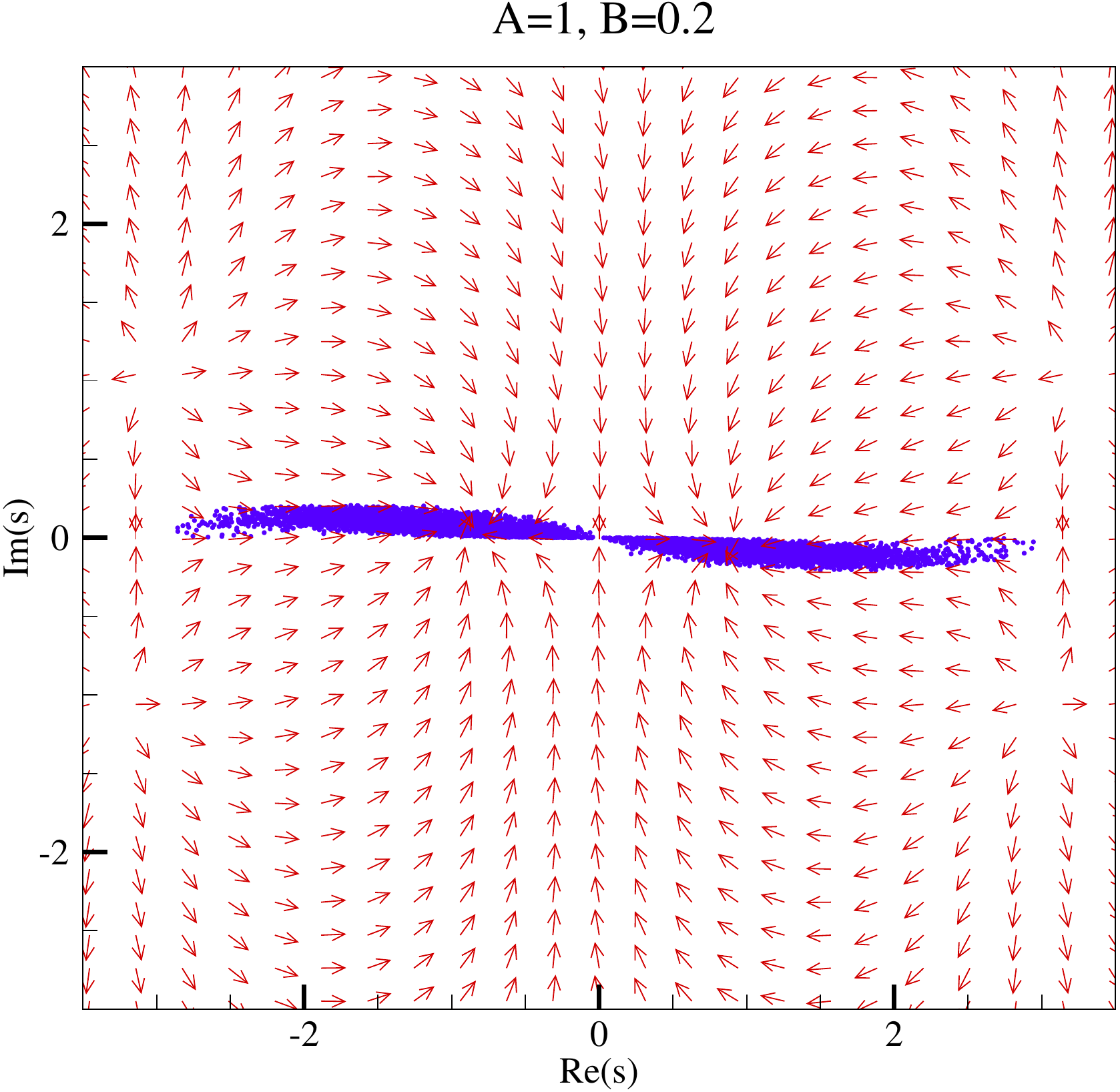} \quad 
\includegraphics[width=7.5cm,clip]{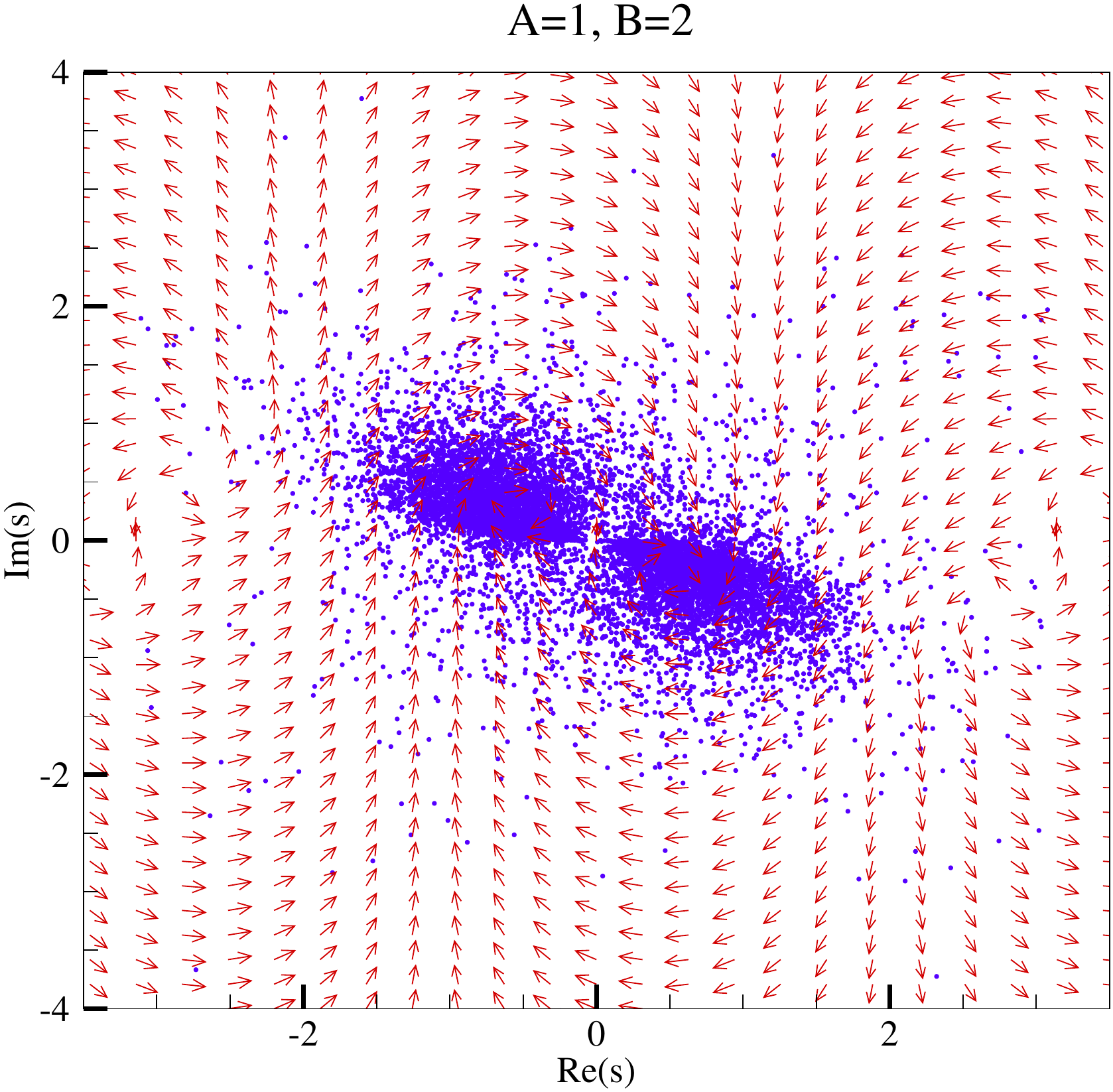} \\[7pt]
\includegraphics[width=7.5cm,clip]{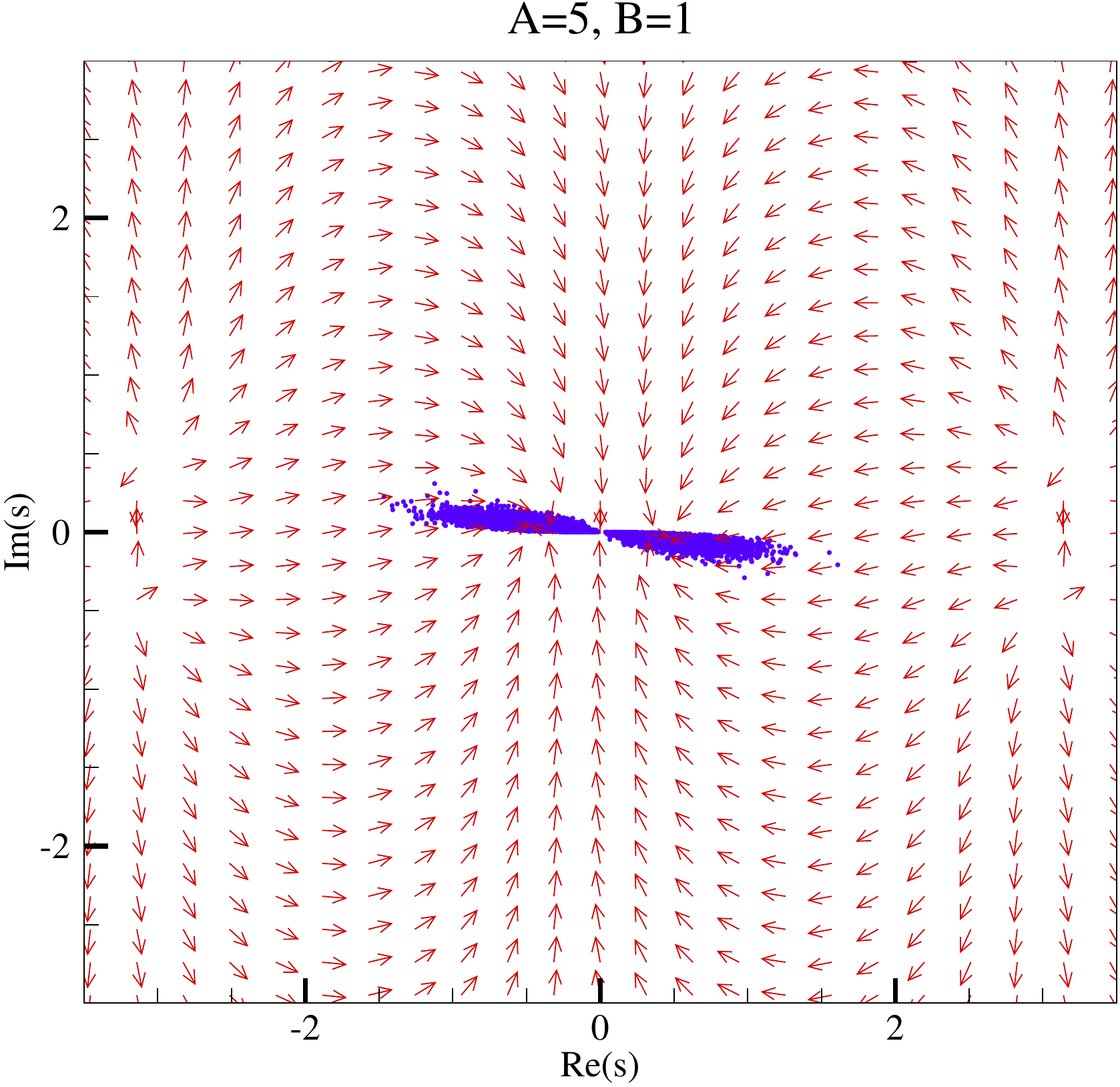} \quad 
\includegraphics[width=7.5cm,clip]{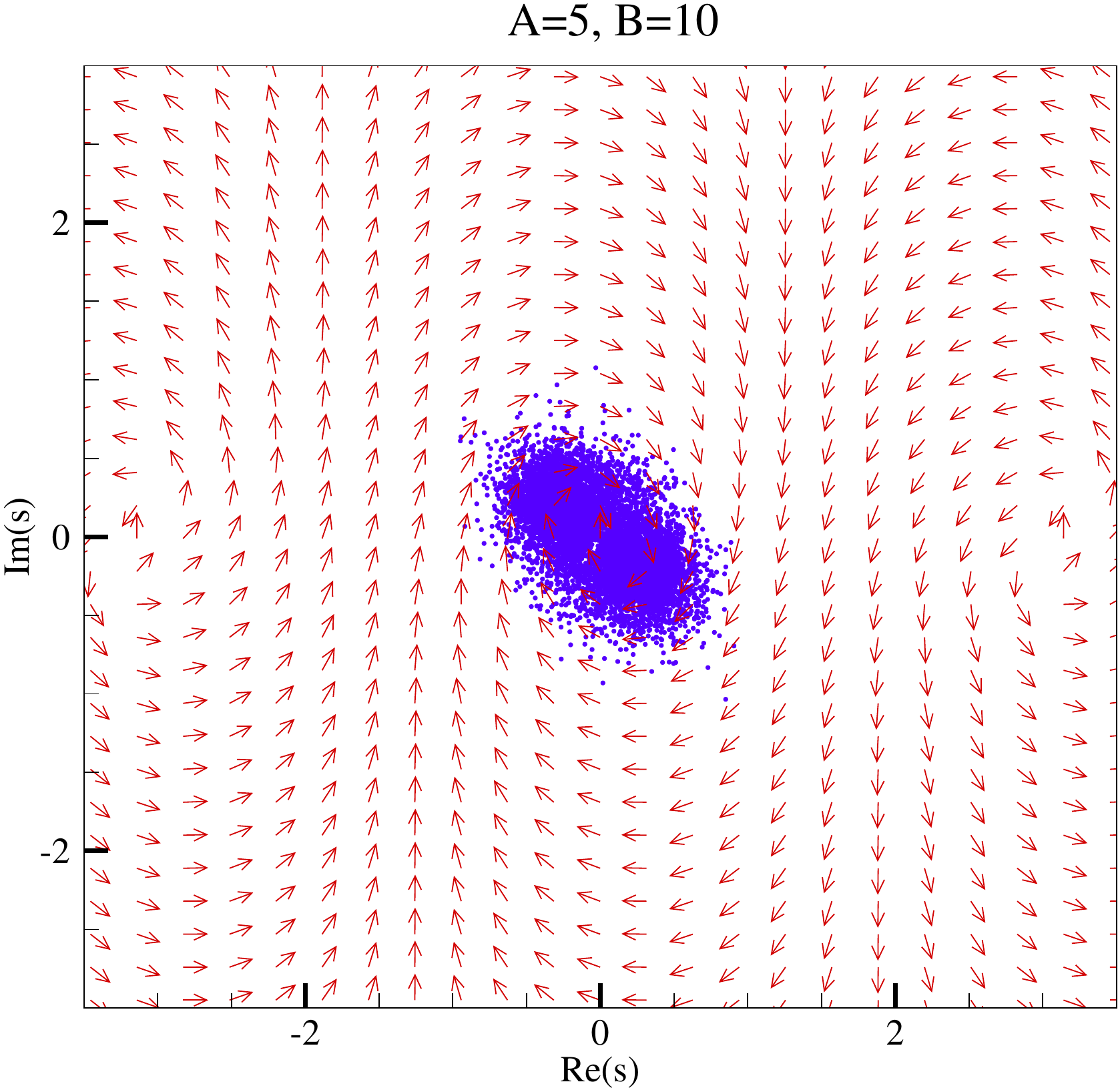} \\
\caption{The distribution of samples. The red arrows are the field of drift velocity, which is normalized for better visualization.} \label{Fig:eig}
\end{figure}

Another observation worth mentioning is that the singularities introduced by gauge cooling seem not to have a bad effect on the numerical results. The failure in the case $A=1$ and $B=2$ looks more like the result of slow decaying distribution function, since $\mathrm{e}^{\mathrm{i}ks}$ increases faster in the imaginary direction when $k$ is larger, which may yield worse results for the observable $\langle O_k \rangle$. However, in the cases $A=1, B=0.2$ and $A=5, B=1$ as shown in Figure \ref{Fig:eig}, we do observe nonsmoothness of distribution functions at the origin, which is clearly the effect of the singular drift.

\section{Conclusion} \label{sec:conclusion}
Due to the accessibility of the exact solution for optimal gauge cooling problem in the one-dimensional case, we have presented a clear picture for the stabilizing effect of gauge cooling for CLM. As a summary, we would like to point out the following four effects of gauge cooling:
\begin{itemize}
\item A large number of redundant degrees of freedom are removed (see Theorem \ref{thm:opt_cooling});
\item Some components of the drift velocity no longer take effect (see Section \ref{sec:SU(n)});
\item Additional drift velocity toward the non-complexified domain is introduced, causing possible confinement of the samples (see Section \ref{sec:SU(2)});
\item Singularities are introduced to the drift velocity (see Section \ref{sec:SU(2)}).
\end{itemize}
The first three effects clearly help stabilize the CLM, while the outcome of the last one remains unclear. Our analysis clearly emphasizes the importance of a good gauge cooling scheme in the multidimensional case, which is part of our ongoing work. Besides, our numerical results in Section \ref{sec:ex2} suggest that the applicability of CLM with gauge cooling should be far beyond the theory given by Theorem \ref{Thm:beta}, as will also be further studied in our future work.

\bibliographystyle{amsplain}
\bibliography{Analysis}

\providecommand{\bysame}{\leavevmode\hbox to3em{\hrulefill}\thinspace}
\providecommand{\MR}{\relax\ifhmode\unskip\space\fi MR }
\providecommand{\MRhref}[2]{%
  \href{http://www.ams.org/mathscinet-getitem?mr=#1}{#2}
}
\providecommand{\href}[2]{#2}
\begin{thebibliography}{10}

\bibitem{Aarts2016}
G.~Aarts, F.~Attanasio, B.~J{\"a}ger, and D.~Sexty, \emph{The {QCD} phase
  diagram in the limit of heavy quarksusing complex {L}angevin dynamics}, J.
  High Ener. Phys \textbf{87} (2016), 87.

\bibitem{Aarts2013}
G.~Aarts, P.~Giudice, and E.~Seiler, \emph{Localised distributions and criteria
  for correctness in complex {L}angevin dynamics}, Ann. Phys. \textbf{337}
  (2013), 238--260.

\bibitem{Aarts2010}
G.~Aarts, E.~Seiler, and I.-O. Stamatescu, \emph{Complex {L}angevin method:
  When can it be trusted?}, Phys. Rev. D \textbf{81} (2010), 054508.

\bibitem{Ambjorn1986}
J.~Ambj{\o}rn, M.~Flensburg, and C.Peterson, \emph{The complex langevin
  equation and {M}onte {C}arlo simulations of actions with static charges},
  Nucl. Phys. B \textbf{275} (1986), 375--397.

\bibitem{Attanasio2018}
F.~Attanasio and B.~J{\"a}ger, \emph{Improved convergence of complex {L}angevin
  simulations}, EPJ Web Conf. \textbf{175} (2018), 07039.

\bibitem{Cristoforetti2012}
M.~Cristoforetti, F.~Di Renzo, and L.~Scorzato, \emph{New approach to the sign
  problem in quantum field theories: High density {QCD} on a {L}efschetz
  thimble}, Phys. Rev. D \textbf{86} (2012), 074506.

\bibitem{Forcrand2009}
P.~de~Forcrand, \emph{Simulating {QCD} at finite density}, Proceedings of
  Science (LAT2009), 2010.

\bibitem{Gattringer2010}
C.~Gattringer and C.~B. Lang, \emph{Quantum chromodynamics on the lattice: An
  introductory presentation}, Springer, 2010.

\bibitem{Gausterer1994}
H.~Gausterer, \emph{On the correct convergence of complex {L}angevin
  simulations for polynomial actions}, J. Phys. A: Math. Gen. \textbf{27}
  (1994), 1325--1330.

\bibitem{Gausterer1998}
\bysame, \emph{Complex {L}angevin: A numerical method?}, Nucl. Phys. A
  \textbf{642} (1998), c239--c250.

\bibitem{Gausterer1988}
H.~Gausterer and S.~Sanielevici, \emph{Remarks on the numerical solution of
  {L}angevin equations on unitary group spaces}, Comput. Phys. Commun.
  \textbf{52} (1988), 43--48.

\bibitem{Georgi1999}
H.~Georgi, \emph{Lie algebras in particle physics}, Westview Press, 1999.

\bibitem{Greiner2007}
W.~Greiner, S.~Schramm, and E.~Stein, \emph{Quantum chromodynamics},
  Springer-Verlag Berlin Heidelberg, 2007.

\bibitem{Grinfeld2014}
M.~Grinfeld, \emph{Mathematical tools for physicists, 2nd edition}, Wiley-VCH,
  2014.

\bibitem{Hall2015}
B.~C. Hall, \emph{Lie groups, lie algebras, and representations}, Springer
  International Publishing, 2015.

\bibitem{Kloeden1992}
P.~E. Kloeden and E.~Platen, \emph{Numerical solution of stochastic
  differential equations}, Springer, 1992.

\bibitem{Nagata2016}
K.~Nagata, J.~Nishimura, and S.~Shimasaki, \emph{Argument for justification of
  the complex {L}angevin method and the condition for correct convergence},
  Phys. Rev. D \textbf{94} (2016), 114515.

\bibitem{Nagata2016G}
\bysame, \emph{Gauge cooling for the singular-drift problem in the complex
  langevin method --- a test in random matrix theory for finite density {QCD}},
  J. High. Ener. Phys. \textbf{7} (2016), 73.

\bibitem{Nagata2016J}
K.~Nagata, S.~Shimasaki, and J.~Nishimura, \emph{Justification of the complex
  {L}angevin method with the gauge cooling procedure}, Prog. Theor. Exp. Phys.
  \textbf{2016} (2016), no.~1, 013B01.

\bibitem{Parisi1983}
G.~Parisi, \emph{On complex probabilities}, Phys. Lett. B \textbf{131} (1983),
  393--395.

\bibitem{Scherzer2019}
M.~Scherzer, E.~Seiler, D.~Sexty, and I.-O. Stamatescu, \emph{Complex
  {L}angevin and boundary terms}, Phys. Rev. D \textbf{99} (2019), 014512.

\bibitem{Seiler2018}
E.~Seiler, \emph{Status of complex {L}angevin}, EPJ Web Conf. \textbf{175}
  (2018), 01019.

\bibitem{Seiler2013}
E.~Seiler, D.~Sexty, and I.-O. Stamatescu, \emph{Gauge cooling in complex
  {L}angevin for lattice {QCD} with heavy quarks}, Phys. Lett. B \textbf{723}
  (2013), 213--216.

\bibitem{Sexty2014}
D.~Sexty, \emph{Simulating full {QCD} at nonzero density using the complex
  {L}angevin equation}, Phys. Lett. B \textbf{729} (2014), 108--111.

\end{thebibliography}
\end{document}